\DeclareMathOperator{\rank}{rank}
\DeclareMathOperator{\dif}{d}
\DeclareMathOperator{\Lie}{\mathfrak{L}}
\renewcommand{\1}{\mathds{1}} 
\renewcommand{\H}{\mathscr{H}}
\newcommand{\F}{\mathscr{F}}
\newcommand{\Fa}{\mathcal{F}}
\newcommand{\ol}{\mathcal{O}}
\newcommand{\nb}{\mathcal{N}}
\def \a{\alpha}
\def \l{\lambda}
\def \phi{\varphi}
\def \Phi{\varPhi}
\def \p{\pi}
\def \r{\rho}
\def \s{\sigma}
\def \R{\mathbb{R}}
\def \Hq{\mathbb{H}\,}
\def \C{\mathbb{C}\,}
\def\widecheckg{g^{\hspace*{-2.5pt}\vbox to 5pt{\hbox to
0pt{\LARGE$\check{}$}}}\hspace*{2pt}}
\def\widecheckl{\lambda^{\hspace*{-3.5pt}\vbox to 8pt{\hbox to
0pt{\LARGE$\check{}$}}}\hspace*{2pt}}
\begin{document}

\title{On tame $\r$-quaternionic manifolds}
\author{Radu Pantilie}  
\address{R.~Pantilie, Institutul de Matematic\u a ``Simion~Stoilow'' al Academiei Rom\^ane,
C.P. 1-764, 014700, Bucure\c sti, Rom\^ania} 
\email{\href{mailto:Radu.Pantilie@imar.ro}{Radu.Pantilie@imar.ro}} 
\subjclass[2010]{53C28, 32L25, 53C26} 
\keywords{quaternionic geometry, twistor theory}

\newtheorem{thm}{Theorem}[section]
\newtheorem{lem}[thm]{Lemma}
\newtheorem{cor}[thm]{Corollary}
\newtheorem{prop}[thm]{Proposition}

\theoremstyle{definition}

\newtheorem{defn}[thm]{Definition}
\newtheorem{rem}[thm]{Remark}
\newtheorem{exm}[thm]{Example}

\numberwithin{equation}{section}

\thispagestyle{empty}

\begin{abstract}
We introduce the notion of tame $\r$-quaternionic manifold that permits the construction of a finite family of $\r$-connections, 
significant for the geometry involved. This provides, for example, the following:\\ 
\indent 
$\bullet$ A new simple global characterisation of flat (complex-)quaternionic manifolds.\\ 
\indent 
$\bullet$ A new simple construction of the metric and the corresponding Levi-Civita connection of a quaternionic-K\"ahler manifold 
by starting from its twistor space; moreover, our method provides a natural generalization of this correspondence.\\ 
\indent 
Also, a new construction of quaternionic manifolds is obtained, and the properties of twistorial harmonic morphisms with one-dimensional fibres 
from quaternionic-K\"ahler manifolds are studied. 
\end{abstract}

\maketitle

\section*{Introduction}  

\indent 
A (complex) $\r$-quaternionic manifold $M$ is the parameter space of a locally complete family \cite{Kod} of Riemann spheres embedded into 
a complex manifold $Z$, the twistor space of $M$, with nonnegative normal bundles \cite{Pan-qgfs}\,.\\ 
\indent 
In this paper, we introduce the notion of `tame' $\r$-quaternionic manifold (see Definition \ref{defn:tame_ro_q}\,, below) 
that permits the construction, through the Ward transformation, of a finite family of $\r$-connections, significant for the geometry of $M$ 
(determined by $Z$). For example, if $M$ is quaternionic then only one such `fundamental monopole' exists (with respect to some line bundle over $Z$) 
and its flatness is equivalent to the flatness of $M$ 
(Theorem \ref{thm:quatern_flat}\,). If, further, $Z$ is endowed with a contact structure, our approach leads to a quick simple proof 
(Theorem \ref{thm:contact_qK}\,) of the fact \cite{LeB-qK_twist} that then $M$ is quaternionic-K\"ahler. 
Moreover, our method provides a natural generalization of this fact (Theorem \ref{thm:degen_qK}\,; compare \cite{AleMar-Annali96}\,). 
Finally, this is related to the properties of twistorial harmonic morphisms with one-dimensional fibres 
which we study in Section \ref{section:5}\,, whilst a new construction of quaternionic manifolds is given in Section \ref{section:4}\,.

\section{$\r$-quaternionic manifolds} \label{section:1} 

\indent 
For simplicity, unless otherwise stated, we work in the complex analytic category; also, the manifolds are assumed connected. 
A linear $\r$-quaternionic structure \cite{Pan-qgfs} on a (complex) vector space $U$ is given by a (holomorphic) embedding of the Riemann sphere 
$Y$ into ${\rm Gr}(U)$ such that the corresponding tautological exact sequence 
of vector bundles 
\begin{equation} \label{e:tes} 
0\longrightarrow\Fa\longrightarrow Y\times U\longrightarrow\mathcal{U}\longrightarrow 0 
\end{equation} 
induces an isomorphism between $U$ and the space of sections of $\mathcal{U}$\,. Consequently, 
$H^0(\Fa)$ and $H^1(\Fa)$ are trivial and, thus, $\Fa=L^*\otimes V$, for some vector space $V$, 
where $L$ is any line bundle of Chern number $1$ over $Y$. It follows (see \cite[Proposition 2.5]{Pan-hqo}\,) 
that $$V=H^1\bigl(\bigl(L^2\bigr)^*\bigr)^*\otimes H^0\bigl(L^*\otimes\mathcal{U}\bigr)\;.$$  
Further, let $E$ be the dual of the space of sections of $\Fa^*$. Then 
\begin{equation} \label{e:E} 
E=H^0(L)\otimes H^0\bigl(L^*\otimes\mathcal{U}\bigr)
\end{equation}
is the space of sections of $L\otimes H^0\bigl(L^*\otimes\mathcal{U}\bigr)$\,; in particular, also, $E$ is a $\r$-quaternionic vector space. 
Moreover, dualizing \eqref{e:tes}\,, passing to the cohomology exact sequence and then dualizing again, we obtain 
a linear map $\r:E\to U$ that intertwines the embeddings of the Riemann sphere giving the linear $\r$-quaternionic structures on $E$ and $U$\,; 
thus, the latter is determined by the pair $(E,\r)$ (compare \cite{fq}\,, \cite{fq_2}\,).\\ 
\indent 
The automorphism group of a linear $\r$-quaternionic structure (given by \eqref{e:tes}\,) is the group of extended automorphisms of $\mathcal{U}$ 
(compare \cite{fq}\,). Note that, the automorphism group is the semidirect product of the automorphism group of $\mathcal{U}$ 
and the diffeomorphism group of $Y$. Also, $U$ and $E$ are representation spaces of the automorphism group, 
and \eqref{e:tes} and $\r$ are equivariant.\\  
\indent 
More generally, a $\r$-quaternionic manifold \cite{Pan-qgfs} is given by a diagram 
\begin{equation} \label{e:td}   
\begin{gathered} 
\xymatrix{
       &                         \hspace{1mm}                     Y     \ar[dl]_{\psi} \ar[dr]^{\p}      &                  \\
                      Z                               &                                                                 &            M                             
   } 
\end{gathered} 
\end{equation} 
where $\p:Y\to M$ is a Riemann sphere bundle and $\psi$ is a surjective submersion such that, for any $x\in M$, the restriction of $\psi$ to $\p^{-1}(x)$ 
is a diffeomorphism onto its image $t_x$\,, and the corresponding (surjective) morphism between the normal bundles of $\p^{-1}(x)$ in $Y$ 
and $t_x$ in $Z$ induces an isomorphism between their spaces of sections (compare \cite{Kod}\,). As the normal bundle of $\p^{-1}(x)$ in $Y$ 
can be identified with the trivial vector bundle $\p^{-1}(x)\times T_xM$, the obtained isomorphism between $T_xM$ and the space of sections 
of the normal bundle of $t_x$ in $Z$ gives a linear $\r$-quaternionic structure on $T_xM$, for any $x\in M$.\\ 
\indent 
Note that, $\dif\!\p$ induces an embedding of ${\rm ker}\dif\!\psi$ as a vector subbundle of $\p^*(TM)$\,. We denote by $\mathcal{T}M$ 
the vector bundle over $Y$ which is the quotient of $\p^*(TM)$ through ${\rm ker}\dif\!\psi$\,. Then $TM$ is the direct image by $\p$ of $\mathcal{T}M$.\\ 
\indent 
The usual terminology is that $Z$ is the twistor space of (the $\r$-quaternionic manifold) $M$ and $t_x$ is the twistor sphere corresponding to $x\in M$. 
Note that, from \cite{Kod} it follows that, in a neighbourhood of any of its points, $M$ is determined by the embedding of the corresponding 
twistor sphere in $Z$\,. 

\begin{defn}   
1) We say that $M$ is of \emph{constant type} if all the normal bundles of the twistor spheres have the same isomorphism type.\\ 
\indent 
2) If $Y$ is trivial such that the corresponding projection from $Y$ onto the Riemann sphere factorises into $\psi$ followed by a submersion 
from $Z$ onto the Riemann sphere, we say that $M$ is \emph{$\r$-hypercomplex} (compare \cite{fq}\,, \cite{fq_2}\,, \cite{Pan-qPt}\,, 
and the references therein).
\end{defn} 

\begin{rem} \label{rem:ro-hypercomplex_first} 
Let $M$ be a $\r$-quaternionic manifold. Then the frame bundle of $Y$ is a $\r$-hypercomplex manifold whose twistor space 
is the product of $Z$ and the Riemann sphere.  
\end{rem} 

\indent 
By passing to an open neighbourhood of any point of $M$, we may suppose that there exists a line bundle $L$ over $Z$ such that, for any $x\in M$, 
the restriction of $L$ to $t_x$ has Chern number $1$\,. Let $H$ be the direct image by $\p$ of $\psi^*L$\,. 
Thus, for any $x\in M$, the fibre of $H$ over $x$ is the space of sections of $L|_{\p^{-1}(x)}$\,, and, hence, we have $Y=P(H^*)$\,.\\ 
\indent 
From \eqref{e:E}, it follows that the dual $E$ of the direct image by $\p$ of $({\rm ker}\dif\!\psi)^*$ is $H\otimes F$ 
for some vector bundle $F$ (locally defined) over $M$. Note, however, that $E$ does not depend of $L$\,; consequently, the same holds for 
$\mathcal{E}=L\otimes(\p^*F)$. Furthermore, similarly to the $\r$-quaternionic vector spaces 
case, we have a morphism $\r:E\to TM$ characterising, up to integrability, the $\r$-quaternionic structure on $M$. Note that, 
$\r$ is the direct image of a morphism of vector bundles, over $Y$, from $\mathcal{E}$ to $\mathcal{T}M$. 

\begin{rem} \label{rem:compatible_ro-connection} 
In \cite{Pan-qgfs}\,, it is shown that, locally, there exists a $\r$-connection on $Y$ \emph{compatible} with $\psi$\,. 
By this we mean that there exists a morphism of vector bundles $c:\p^*(E)\to TY$ which when composed with the morphism $TY\to\p^*(TM)$\,,  
induced by $\dif\!\p$, gives $\p^*\!\r$\,, and such that $c\bigl(\{e\otimes f\,|\,f\in F\}\bigr)=({\rm ker}\dif\!\psi)_{{\rm Ann}\,e}$\,, 
for any $e\in H\setminus0$\,, where ${\rm Ann}\,e$ denotes the annihilator of $e$ in $H^*$.\\ 
\indent 
Furthermore, the compatible (local) $\r$-connections on $Y$ form an affine space over the space of sections of $E^*$.\\ 
\indent  
Also, the choice of $L$ such that $H$ is the direct image by $\p$ of $\psi^*L$\,, and a compatible $\r$-connection $c$ determines 
a $\r$-connection $\nabla^H$ on $H$ whose projectivisation is $c$ (compare \cite{War-graviton}\,). This is related, 
through the Ward transformation (see \cite{Pan-hmhKPW}\,), to the fact that 
any two $\r$-connections on $H$ inducing the same compatible $\r$-connection on $Y$ `differ' 
by an anti-self-dual $\r$-connection on the trivial line bundle over $M$ 
(recall \cite{Pan-hmhKPW} that, anti-self-duality for a $\r$-connection 
means that its `restrictions' to the projections by $\p$ of the fibres of $\psi$ are flat connections). 
\end{rem} 

\begin{prop} \label{prop:ro-hyper_to_follow} 
$H$ is a $\r$-quaternionic manifold whose twistor space is $L$\,. 
\end{prop}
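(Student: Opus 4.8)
The plan is to realise $H$ explicitly through a double fibration of the form \eqref{e:td}, with twistor space $L$, and then to verify the defining conditions. Denote by $p\colon H\to M$ the bundle projection, and recall that, for any $x\in M$, the diffeomorphism $\psi|_{\p^{-1}(x)}\colon\p^{-1}(x)\to t_x$ identifies the fibre $H_x$ with the space $H^0\bigl(t_x,L|_{t_x}\bigr)$ of sections of $L$ over the twistor sphere $t_x$\,. Accordingly, I would set $\mathcal{Y}=p^*Y$, with projection $\Pi\colon\mathcal{Y}\to H$ (a Riemann sphere bundle, being the pull-back of $\p$), and define $\Psi\colon\mathcal{Y}\to L$ by $\Psi\bigl((x,h),y\bigr)=h\bigl(\psi(y)\bigr)$, for $y\in\p^{-1}(x)$ and $h\in H_x$ regarded as a section of $L$ over $t_x$\,. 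This gives the diagram $L\xleftarrow{\Psi}\mathcal{Y}\xrightarrow{\Pi}H$, and the whole proof amounts to checking that it satisfies the conditions defining a $\r$-quaternionic manifold.

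First I would identify the twistor spheres. For fixed $(x,h)$, as $y$ ranges over $\p^{-1}(x)$ the point $h(\psi(y))$ ranges over the graph of the section $h$ inside $L|_{t_x}$\,; hence $\Psi$ restricts on each fibre of $\Pi$ to a diffeomorphism onto this graph $\widetilde{t}_{x,h}$\,, which will be the twistor sphere of $(x,h)$\,. Next I would compute its normal bundle in $L$\,. As the vertical tangent bundle of $L$ is canonically $q^*L$, where $q\colon L\to Z$ is the projection, the standard exact sequence $0\to q^*L\to TL\to q^*TZ\to0$ over $L$, restricted to $\widetilde{t}_{x,h}$ and passed to the quotient by $T\widetilde{t}_{x,h}$ (which $\dif\!q$ maps isomorphically onto $Tt_x$), yields
\begin{equation*}
0\longrightarrow L|_{t_x}\longrightarrow\widetilde{N}_{x,h}\longrightarrow N_x\longrightarrow0\,,
\end{equation*}
where $N_x$ is the normal bundle of $t_x$ in $Z$ and $\widetilde{N}_{x,h}$ that of $\widetilde{t}_{x,h}$ in $L$\,.

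Then I would pass to cohomology. As $L|_{t_x}$ has Chern number $1$ we have $H^1\bigl(L|_{t_x}\bigr)=0$ and $H^0\bigl(L|_{t_x}\bigr)=H_x$\,, whilst $H^1(N_x)=0$ and $H^0(N_x)=T_xM$ by the $\r$-quaternionic structure of $M$\,; therefore $H^1\bigl(\widetilde{N}_{x,h}\bigr)=0$ and the sequence induces
\begin{equation*}
0\longrightarrow H_x\longrightarrow H^0\bigl(\widetilde{N}_{x,h}\bigr)\longrightarrow T_xM\longrightarrow0\,.
\end{equation*}
On the other hand, the total space $H$ carries the canonical sequence $0\to H_x\to T_{(x,h)}H\to T_xM\to0$\,, and the Kodaira map $T_{(x,h)}H\to H^0\bigl(\widetilde{N}_{x,h}\bigr)$, sending an infinitesimal displacement of $(x,h)$ to the induced infinitesimal displacement of $\widetilde{t}_{x,h}$\,, is compatible with these two sequences: on the sub it is the identity of $H_x$ (vertical displacements of the graph are exactly the sections of $L|_{t_x}$ coming from the variation of $h$), and on the quotient it is the isomorphism $T_xM=H^0(N_x)$ of the structure of $M$\,. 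By the five lemma the Kodaira map is an isomorphism, which is the required condition on the normal bundles; moreover $\Psi$ is a surjective submersion, surjectivity holding because the evaluation $H_x\to L_z$ is onto for each $z\in t_x$ (so each point of $L$ lies on some twistor sphere), and submersivity because the sections of $\widetilde{N}_{x,h}$, which span it as $\widetilde{N}_{x,h}$ is nonnegative on a Riemann sphere, together with $T\widetilde{t}_{x,h}$ fill out $TL$ along $\widetilde{t}_{x,h}$\,.

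The routine parts are the construction of the diagram and the linear-algebra bookkeeping; the step I expect to require most care is establishing the normal-bundle sequence and, above all, verifying the commutativity of the two squares relating the Kodaira map to the canonical sequences, i.e. checking that the fibre directions of $H$ really correspond to the vertical, section-valued deformations of the graph and the base directions to the deformations of $t_x$ in $Z$\,. Once this compatibility is in place, the five-lemma argument and the identification of the twistor space with $L$ follow immediately.
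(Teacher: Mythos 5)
Your proposal is correct and takes essentially the same route as the paper: your diagram $L\xleftarrow{\Psi}\mathcal{Y}\xrightarrow{\Pi}H$, with $\mathcal{Y}=p^*Y\,(=\p^*H)$ and $\Psi$ the evaluation map, is exactly the paper's diagram \eqref{diagram:for_ro-quaternionic}, where $H+Y=\p^*H$ and $\widetilde{\psi}$ is induced by the evaluation projection $H+Y\to\psi^*L$. The paper leaves the verification of the defining conditions implicit, whereas you carry it out in detail (the normal bundle extension, the cohomology computation, the compatibility of the Kodaira map with the two exact sequences, and the five lemma), and this verification is sound.
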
 
\begin{proof} 
This follows from the following (commutative) diagram 
\begin{equation}  \label{diagram:for_ro-quaternionic} 
\begin{gathered} 
\xymatrix{
                                                       &              H+Y \ar[dl]_{\widetilde{\psi}} \ar[d]  \ar[dr]       &                  \\
    L             \ar[d]   &           \hspace{1mm}            Y    \ar[dl]_{\psi} \ar[dr]^{\p}      &    H \ar[d]              \\
                      Z                               &                                                                 &            M                             
   } 
\end{gathered} 
\end{equation} 
where $H+Y$ is the pull back by $\p$ of $H$, and $\widetilde{\psi}$ 
is induced by the projection $H+Y\to\psi^*L$ given by the fact that $H$ is the direct image by $\p$ of~$\psi^*L$\,. 
\end{proof} 

\begin{cor}[compare \cite{Pan-qPt}\,] \label{cor:ro-hyper_from_ro-quatern} 
Let $P$ be the frame bundle of $H$. Then $P$ is endowed with a ${\rm GL}(2)$-invariant $\r$-hypercomplex structure whose twistor space 
is $(L\oplus L)\setminus0$\,. 
\end{cor}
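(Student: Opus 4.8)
The plan is to write down explicitly the twistorial diagram of the asserted structure, with twistor space $(L\oplus L)\setminus0$\,, and to reduce its defining properties to Proposition \ref{prop:ro-hyper_to_follow}\,. First note that, for any $x\in M$\,, the fibre of $H$ is $H^0\bigl(L|_{t_x}\bigr)$ and $L|_{t_x}$ has Chern number $1$\,; hence $H$ has rank $2$ and $P$ is a principal ${\rm GL}(2)$-bundle over $M$\,, and over $P$ the pull-back of $H$ is canonically trivialised by the tautological frame. Writing $\mathrm{pr}:P\to M$ for the projection, it follows that $\mathrm{pr}^*Y=P\bigl(\mathrm{pr}^*H^*\bigr)$ is the trivial Riemann sphere bundle $P\times\mathbb{CP}^1$\,, which will serve as the twistor sphere bundle of $P$\,.

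Next, I would build the twistorial submersion. Recall from the proof of Proposition \ref{prop:ro-hyper_to_follow} the evaluation morphism $\p^*H\to\psi^*L$ over $Y$\,, sending $h\in H_{\p(y)}=H^0\bigl(L|_{t_{\p(y)}}\bigr)$ to its value $h(\psi(y))\in L_{\psi(y)}$\,. Applying it to the two vectors $h_1,h_2$ of the tautological frame defines, for $\bigl(x,(h_1,h_2)\bigr)\in P$ and $y\in\p^{-1}(x)$\,, the assignment $\Psi\bigl((x,(h_1,h_2)),y\bigr)=\bigl(h_1(\psi(y)),h_2(\psi(y))\bigr)\in L_{\psi(y)}\oplus L_{\psi(y)}$\,. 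Since $h_1,h_2$ form a basis of $H^0\bigl(L|_{t_x}\bigr)$ they have no common zero on $t_x$\,, so $\Psi$ takes values in $(L\oplus L)\setminus0$ and yields a map $\Psi:P\times\mathbb{CP}^1\to(L\oplus L)\setminus0$ lying over $\psi$ through the projection $(L\oplus L)\setminus0\to Z$\,.

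The main point to verify is then that $\Psi$\,, together with $\mathrm{pr}$\,, defines a $\r$-quaternionic structure on $P$ with twistor space $(L\oplus L)\setminus0$\,; this reduction to Proposition \ref{prop:ro-hyper_to_follow} is the principal obstacle. The twistor sphere through $\bigl(x,(h_1,h_2)\bigr)$ is the graph over $t_x$ of the section $(h_1,h_2)$ of $(L\oplus L)|_{t_x}$\,, and a short computation shows that its normal bundle in $(L\oplus L)\setminus0$ splits as $\bigl(L|_{t_x}\bigr)^{\oplus2}\oplus\nu$\,, where $\nu$ is the normal bundle of $t_x$ in $Z$\,. As $L|_{t_x}$ has Chern number $1$ and $\nu$ is nonnegative (because $M$ is $\r$-quaternionic), this normal bundle is nonnegative, and its space of sections $H^0\bigl(L|_{t_x}\bigr)^{\oplus2}\oplus H^0(\nu)$ has dimension $4+\dim M=\dim P$ and is naturally identified with the tangent space of $P$ at $\bigl(x,(h_1,h_2)\bigr)$\,, splitting as $\mathfrak{gl}(2)\oplus T_xM$ accordingly; in particular $\Psi$ is a surjective submersion inducing an isomorphism on the relevant spaces of sections, as required.

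Finally, the structure is $\r$-hypercomplex: the trivial bundle $P\times\mathbb{CP}^1$ projects onto $\mathbb{CP}^1$\,, and this projection factorises through $\Psi$ followed by the submersion $(L\oplus L)\setminus0\to\mathbb{CP}^1$\,, $(l_1,l_2)\mapsto[l_1:l_2]$ (well defined since rescaling the fibre of $L$ scales $l_1$ and $l_2$ alike), because in the frame $(h_1,h_2)$ the point $y\in P(H_x^*)$ has homogeneous coordinate $[h_1(\psi(y)):h_2(\psi(y))]$\,. The standard ${\rm GL}(2)$-action on the $\mathbb{C}^2$-factor of $L\oplus L=\mathbb{C}^2\otimes L$ makes $\Psi$ equivariant for the principal action on $P$ and the induced action on $\mathbb{CP}^1$\,, whence the structure is ${\rm GL}(2)$-invariant; consistently, passing to the quotient by the scalars $\mathbb{C}^*$ recovers Remark \ref{rem:ro-hypercomplex_first}\,, since the frame bundle of $Y$ is $P/\mathbb{C}^*$ and $\bigl((L\oplus L)\setminus0\bigr)/\mathbb{C}^*=Z\times\mathbb{CP}^1$\,.
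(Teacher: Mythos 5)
Your construction is, in substance, the paper's: your map $\Psi$ is exactly the restriction to $P$ of the twistor submersion that Proposition \ref{prop:ro-hyper_to_follow} (doubled) provides for ${\rm Hom}\bigl(\C^{\!2},H\bigr)$, and your factorisation through $(l_1,l_2)\mapsto[l_1:l_2]$ and the equivariance argument agree with the paper's proof. The difference is that where the paper simply restricts the $\r$-quaternionic structure of ${\rm Hom}\bigl(\C^{\!2},H\bigr)$ to the open ${\rm GL}(2)$-invariant subset $P$ of free orbits, you re-verify the normal-bundle conditions by hand, and it is exactly there that your proof contains a genuine error: the normal bundle $N$ of the twistor sphere $\Gamma$ through $\bigl(x,(h_1,h_2)\bigr)$ does \emph{not}, in general, split as $\bigl(L|_{t_x}\bigr)^{\oplus2}\oplus\nu$. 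What the geometry gives is only an exact sequence $0\to\bigl(L|_{t_x}\bigr)^{\oplus2}\to N\to\nu\to0$, whose extension class is in general nonzero. Concretely, take the situation of Theorem \ref{thm:Veronese_space_n=2}\,: $Z$ a surface containing a twistor sphere $t$ with $\nu=\ol(4)$ and nontrivial normal sequence (for example, a conic in $\C\!P^2$), so that $TZ|_t\cong\ol(3)\oplus\ol(3)$. The projection $(L\oplus L)\setminus0\to P(L\oplus L)=Z\times\C\!P^1$ is a principal $(\C\!\setminus\{0\})$-bundle mapping $\Gamma$ isomorphically onto the graph of the isomorphism $[h_1:h_2]:t\to\C\!P^1$, and that graph has normal bundle isomorphic to $TZ|_t$ in $Z\times\C\!P^1$ (via $(a,b)\mapsto a-(\dif\!\phi)^{-1}b$); hence there is an exact sequence $0\to\ol\to N\to\ol(3)\oplus\ol(3)\to0$. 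If $N$ were $\ol(1)^{\oplus2}\oplus\ol(4)$, it would admit a bundle epimorphism onto $\ol(3)^{\oplus2}$; but every morphism $\ol(4)\to\ol(3)$ vanishes, and an epimorphism $\ol(1)^{\oplus2}\to\ol(3)^{\oplus2}$ would have nowhere-vanishing determinant in $H^0\bigl(\ol(4)\bigr)$, which is impossible. So $N\not\cong\bigl(L|_t\bigr)^{\oplus2}\oplus\nu$ there.

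The gap is repairable, because nothing you need actually requires the splitting. From the exact sequence and $H^1\bigl(\ol(1)^{\oplus2}\bigr)=0$ you still get $H^1(N)=0$ and $h^0(N)=4+\dim M$; $N$ is still nonnegative, hence globally generated, so $\Psi$ is a submersion (twist the sequence by $\ol(-1)$ and take cohomology: an extension of nonnegative bundles on $\C\!P^1$ is nonnegative); and the canonical map $T_pP\to H^0(N)$ is an isomorphism by the five lemma applied to the morphism of exact sequences from $0\to{\rm Hom}\bigl(\C^{\!2},H_x\bigr)\to T_pP\to T_xM\to0$ to $0\to H^0\bigl(\bigl(L|_{t_x}\bigr)^{\oplus2}\bigr)\to H^0(N)\to H^0(\nu)\to0$ — note that $T_pP$ has no canonical splitting as $\mathfrak{gl}(2)\oplus T_xM$ either, so the exact-sequence formulation is the correct one on both sides. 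You should also record why $\Psi$ is onto $(L\oplus L)\setminus0$ (any nonzero pair of values at a point $z\in t_x$ is attained by some basis of $H^0\bigl(L|_{t_x}\bigr)$), since the statement identifies the twistor space with all of $(L\oplus L)\setminus0$. With these corrections your argument is complete; the paper's route via Proposition \ref{prop:ro-hyper_to_follow} and the open embedding $P\subseteq{\rm Hom}\bigl(\C^{\!2},H\bigr)$ is shorter precisely because it never has to compute $N$.
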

\begin{proof} 
A quick consequence of Proposition \ref{prop:ro-hyper_to_follow} is that ${\rm Hom}\bigl(\C^{\!2},H\bigr)$ is endowed with a ${\rm GL}(2)$-invariant 
$\r$-quaternionic structure whose twistor space is ${\rm Hom}\bigl(\C^{\!2},L\bigr)$. Now, we may embed $P$ into ${\rm Hom}\bigl(\C^{\!2},H\bigr)$ 
as the open subset which is the union of the free orbits of the action of ${\rm GL}(2)$\,. Consequently, $P$ is endowed with a ${\rm GL}(2)$-invariant 
$\r$-quaternionic structure whose twistor space is $(L\oplus L)\setminus0$\,.\\ 
\indent 
To complete the proof, note that, as $Y$ is a bundle associated to $P$ its pull back by the projection $\p_P:P\to M$ is trivial. 
Furthermore, the projection from $\p_P^*Y=P\times\C\!P^1$ to $\C\!P^1$ factorises as the restriction to $\p_P^*Y(=\p^*P)$ of the morphism of vector bundles 
$\p^*\bigl({\rm Hom}\bigl(\C^{\!2},H\bigr)\bigr)={\rm Hom}\bigl(\C^{\!2},\p^*H\bigr)\to{\rm Hom}\bigl(\C^{\!2},(\psi^*L)\bigr)$\,, over $Y$, followed by the projection 
${\rm Hom}\bigl(\C^{\!2},(\psi^*L)\bigr)\setminus0\to P\bigl({\rm Hom}\bigl(\C^{\!2},(\psi^*L)\bigr)\bigr)=Y\times\C\!P^1$, 
followed by $Y\times\C\!P^1\to\C\!P^1$. The proof quickly follows.  
\end{proof} 

\begin{rem} \label{rem:ro-hyper_from_ro-quatern}
With the same notations as in Corollary \ref{cor:ro-hyper_from_ro-quatern}\,, the pull back of $L$ through the projection from the twistor space of $P$ 
to $Z$ is canonically isomorphic to the pull back through the projection $(L\oplus L)\setminus0\to\C\!P^1$ of the dual of the tautological line bundle. 
Indeed, this can be seen by restricting to any twistor sphere in $Z$. 
\end{rem} 

\begin{prop} \label{prop:for_global_ro_M} 
Let $L$ and $L'$ be line bundles over $Z$ whose restrictions to each twistor sphere have Chern numbers $1$\,.\\ 
\indent 
If there exists $k\in\mathbb{N}\setminus\{0\}$ such that $L^k$ and $(L')^k$ are isomorphic then there exists a brackets preserving isomorphism 
between $TP/{\rm GL}(2)$ and $TP'/{\rm GL}(2)$\,, where $P$ and $P'$ are the frame bundles of the direct images by $\p$ of $\psi^*L$ and $\psi^*L'$, 
respectively.  
\end{prop}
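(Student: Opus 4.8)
The plan is to reduce the statement to the elementary observation that $H$ and $H'$ differ by the tensor product with a line bundle on $M$, and that the hypothesis forces this line bundle to be flat. First I would note that, since $L$ and $L'$ restrict to Chern number $1$ on each twistor sphere, and $\psi$ restricts to a biholomorphism from each fibre of $\p$ onto the corresponding twistor sphere, the line bundle $\psi^*\bigl(L\otimes(L')^{-1}\bigr)$ is trivial on every fibre of $\p$\,. By the see-saw principle for the Riemann sphere bundle $\p:Y\to M$ (whose fibres satisfy $H^0(\mathcal{O})=\C$), it is therefore the pull back $\p^*\ell^{-1}$ of a uniquely determined line bundle $\ell$ on $M$\,. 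Applying the projection formula to $H=\p_*\bigl(\psi^*L\bigr)$ and $H'=\p_*\bigl(\psi^*L'\bigr)$ gives $H'=H\otimes\ell$\,. Consequently $P'$ is the central twist of $P$ by $\ell$\,: a frame of $H'=H\otimes\ell$ is the tensor product of a frame of $H$ with a frame of $\ell$\,, so ${\rm End}(H')={\rm End}(H)$ canonically, and $TP'/{\rm GL}(2)$ and $TP/{\rm GL}(2)$ have the same adjoint bundle.

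Next I would show that $\ell$ is a torsion line bundle. Pulling back the isomorphism $L^k\cong(L')^k$ by $\psi$ and using $\psi^*\bigl(L\otimes(L')^{-1}\bigr)=\p^*\ell^{-1}$ gives $\p^*\bigl(\ell^{-k}\bigr)\cong\mathcal{O}_Y$\,; since the direct image by $\p$ of $\mathcal{O}_Y$ is $\mathcal{O}_M$ (the fibres being Riemann spheres), taking direct images yields $\ell^k\cong\mathcal{O}_M$\,. Thus $\ell$ is $k$-torsion in the Picard group of $M$, and hence admits a flat holomorphic connection whose transition functions are locally constant, valued in the group of $k$-th roots of unity.

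It remains to produce the brackets-preserving isomorphism, and this is the heart of the matter. Recall that $TP/{\rm GL}(2)$ is the Lie algebroid of ${\rm GL}(2)$-invariant vector fields on $P$, with bracket induced from that of vector fields and with anchor onto $TM$\,. Choosing an open cover of $M$ over which $\ell$ is trivialised by its flat connection, the frame bundle $P'$ of $H'=H\otimes\ell$ has transition functions $g_{ij}\,\zeta_{ij}$\,, where $g_{ij}$ are those of $P$ and $\zeta_{ij}\in\C^*$ are the locally constant $k$-th roots of unity trivialising $\ell$\,. The structure cocycle of the Atiyah bracket depends on these transition functions only through the adjoint action and the Maurer--Cartan forms $\dif g_{ij}\,g_{ij}^{-1}$\,; since $\zeta_{ij}$ is central and $\dif\zeta_{ij}=0$ by flatness, the cocycles for $P$ and $P'$ coincide, producing a well defined isomorphism $TP/{\rm GL}(2)\xrightarrow{\sim}TP'/{\rm GL}(2)$ intertwining the anchors and the brackets. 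The main obstacle is precisely this last step: one must verify that flatness of $\ell$ annihilates the only term—the central Maurer--Cartan contribution—by which the two brackets could differ; the $\r$-hypercomplex descriptions of $P$ and $P'$ furnished by Corollary \ref{cor:ro-hyper_from_ro-quatern}\,, through their twistor spaces $(L\oplus L)\setminus0$ and $(L'\oplus L')\setminus0$\,, can then be used to phrase this identification intrinsically.
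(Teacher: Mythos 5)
Your proposal is correct and takes essentially the same approach as the paper: your see-saw line bundle $\ell=\p_*\bigl(\psi^*(L'\otimes L^*)\bigr)$ is precisely the bundle $\mathcal{L}$ that the paper obtains from the Ward transformation, your deduction $\ell^k\cong\mathcal{O}_M$ matches the paper's observation that $\mathcal{L}^k$ admits a nowhere zero section and hence that $\mathcal{L}$ has locally constant cocycles, and your final Atiyah-algebroid cocycle computation (centrality of $\zeta_{ij}$ plus $\dif\zeta_{ij}=0$) is exactly the content behind the paper's closing phrase ``the proof quickly follows''. No gaps.
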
  
\begin{proof} 
Let $H$ and $H'$ be the direct images by $\p$ of $\psi^*L$ and $\psi^*L'$, respectively. Then $H'=\mathcal{L}\otimes H$, where $\mathcal{L}$ 
is the line bundle over $M$ corresponding to $L'\otimes L^*$ through the Ward transformation.\\ 
\indent 
If there exists $k\in\mathbb{N}\setminus\{0\}$ such that $L^k$ and $(L')^k$ are isomorphic then $\mathcal{L}^k$ admits a nowhere zero section. 
Consequently, with respect to suitable open coverings, $\mathcal{L}$ is given by locally constant cocycles. The proof quickly follows.   
\end{proof} 

\begin{thm} \label{thm:global_ro_M} 
Let $M$ be a $\r$-quaternionic manifold, and let $\mathcal{L}$ be any line bundle (globally defined) on the twistor space $Z$ of $M$ 
whose restriction to some (and, hence, any) twistor sphere is nontrivial; denote $\mathscr{T}Z=T(\mathcal{L}\setminus0)/(\C\!\setminus\{0\})$\,.\\ 
\indent 
Then, for any twistor sphere $t\subseteq Z$, there exists an exact sequence 
$$0\longrightarrow (L\oplus L)|_t\longrightarrow(\mathscr{T}Z)|_t\longrightarrow Nt\longrightarrow0\;$$  
where $Nt$ is the normal bundle 
of $t$ in $Z$ and $L$ is any line bundle defined in some open neighbourhood of $t$ such that $L|_t$ has Chern number $1$ and 
$\mathcal{L}=L^k$\,, for some $k\in\mathbb{Z}\setminus\{0\}$\,.\\ 
\indent 
Moreover, the direct image by $\p$ of $\psi^*(\mathscr{T}Z)$ is $TP/{\rm GL}(2)$\,, 
where $P$ is the frame bundle of the direct image by $\p$ of $\psi^*L$\,.  
\end{thm}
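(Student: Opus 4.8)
The plan is to realise $\mathscr{T}Z$ as the Atiyah algebroid of the principal $(\C\!\setminus\{0\})$-bundle $\mathcal{L}\setminus0\to Z$: its (local) sections are the $(\C\!\setminus\{0\})$-invariant vector fields on $\mathcal{L}\setminus0$, and the fundamental vector field of the action trivialises the kernel of the natural projection onto $TZ$, giving the Atiyah exact sequence
\[ 0\longrightarrow\mathcal{O}_Z\longrightarrow\mathscr{T}Z\longrightarrow TZ\longrightarrow0\;. \]
First I would fix a twistor sphere $t\subseteq Z$ and restrict this sequence to $t$. As $t\cong\C\!P^1$ we also have $0\to Tt\to TZ|_t\to Nt\to0$ with $Tt\cong L^2|_t=\mathcal{O}(2)$. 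Let $Q\subseteq\mathscr{T}Z|_t$ be the preimage of $Tt$; by construction $Q$ is precisely the Atiyah algebroid of the line bundle $\mathcal{L}|_t$ over $t$, so it sits in $0\to\mathcal{O}\to Q\to\mathcal{O}(2)\to0$, while $\mathscr{T}Z|_t/Q=TZ|_t/Tt=Nt$.

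The crux of this first part is that the class of the extension $0\to\mathcal{O}\to Q\to\mathcal{O}(2)\to0$ in ${\rm Ext}^1(\mathcal{O}(2),\mathcal{O})=H^1\bigl(t,\Omega^1_t\bigr)=H^1\bigl(\C\!P^1,\mathcal{O}(-2)\bigr)\cong\C$ is, up to a nonzero factor, the Atiyah (Chern) class of $\mathcal{L}|_t$, hence equals $k\,c_1(L|_t)=k\neq0$. Thus the extension is non-split; since the only rank-two, degree-two bundle on $\C\!P^1$ admitting $\mathcal{O}$ as a subbundle with quotient $\mathcal{O}(2)$, other than the split $\mathcal{O}\oplus\mathcal{O}(2)$, is $\mathcal{O}(1)\oplus\mathcal{O}(1)$, we conclude $Q\cong(L\oplus L)|_t$ and obtain the asserted sequence $0\to(L\oplus L)|_t\to\mathscr{T}Z|_t\to Nt\to0$. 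I expect this extension-class computation (standard, but pivotal) to be where the hypothesis $k\neq0$ is essential.

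For the ``moreover'' part, I would push the Atiyah sequence forward, first by $\psi^*$ and then by $\p_*$. Since $\psi^*\mathcal{O}_Z=\mathcal{O}_Y$ and $\p$ has $\C\!P^1$-fibres, $\p_*\mathcal{O}_Y=\mathcal{O}_M$ and $R^1\p_*\mathcal{O}_Y=0$, so I get
\[ 0\longrightarrow\mathcal{O}_M\longrightarrow\p_*\psi^*\mathscr{T}Z\longrightarrow\p_*\psi^*TZ\longrightarrow0\;. \]
Fibrewise over $x$ this is the $H^0$ of the sphere-level sequence; its central subsheaf comes from $H^0(\mathcal{O}_{t_x})=\C$, while the part coming from $H^0(Tt_x)=H^0(\mathcal{O}(2))=\mathfrak{sl}(2)$ identifies $\p_*\psi^*TZ$ with the Atiyah algebroid of the $\C\!P^1$-bundle $Y=P(H^*)\to M$, whose adjoint bundle is ${\rm End}_0(H)$.

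It remains to match this with the Atiyah sequence $0\to{\rm End}(H)\to TP/{\rm GL}(2)\to TM\to0$ of the frame bundle $P$. Using the decomposition ${\rm End}(H)=\mathcal{O}_M\oplus{\rm End}_0(H)$, the central $\mathcal{O}_M$ above maps to the centre of ${\rm End}(H)$; because $\mathcal{L}=L^k$ with $k\neq0$, the fundamental field of $\C\!\setminus\{0\}$ corresponds to $k$ times the central generator of $\mathfrak{gl}(2)$ scaling $H$, so this map is an isomorphism. Combined with the identification of $\p_*\psi^*TZ$ with ${\rm At}(Y)=(TP/{\rm GL}(2))/\mathcal{O}_M$, the two sequences glue to a bracket-preserving isomorphism $\p_*\psi^*\mathscr{T}Z\cong TP/{\rm GL}(2)$, the brackets matching because the direct image of invariant vector fields is a morphism of Lie algebras. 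The genuine work here is to make the identification $\p_*\psi^*TZ={\rm At}(Y)$ and the gluing of the two central extensions precise, for which I would invoke Corollary \ref{cor:ro-hyper_from_ro-quatern} and Remark \ref{rem:ro-hyper_from_ro-quatern}.
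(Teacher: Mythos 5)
Your first part is correct, and it actually supplies a detail the paper leaves implicit: the paper's proof merely notes $\mathscr{T}Z=T(L\setminus0)/(\C\!\setminus\{0\})$ and reads off the exact sequence from the Euler-type identification $T\bigl((L|_t)\setminus0\bigr)/(\C\!\setminus\{0\})\cong(L\oplus L)|_t$, whereas you obtain the same subbundle by computing the class of $0\to\mathcal{O}\to Q\to Tt\to0$ in ${\rm Ext}^1(Tt,\mathcal{O})$ as the Atiyah class $k\,c_1(L|_t)\neq0$ and then classifying. Both are sound, and your route isolates exactly where $k\neq0$ enters.

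The ``moreover'' part, however, has a genuine gap at its decisive step, in two places. First, the identification $\p_*\psi^*(TZ)\cong{\rm At}(Y)$ is asserted rather than proved; it is true, but not formal: fibrewise, ${\rm At}(Y)_x=H^0\bigl(TY|_{\p^{-1}(x)}\bigr)$, and the map to $H^0\bigl(TZ|_{t_x}\bigr)$ induced by $\dif\!\psi$ has kernel $H^0(\Fa_x)$ and cokernel contained in $H^1(\Fa_x)$, where $\Fa_x=({\rm ker}\dif\!\psi)|_{\p^{-1}(x)}$, so one must invoke $H^0(\Fa_x)=H^1(\Fa_x)=0$, i.e.\ the defining property of the linear $\r$-quaternionic structures; this is fixable. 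Second, and more seriously, you conclude $\p_*\psi^*(\mathscr{T}Z)\cong TP/{\rm GL}(2)$ from the fact that both are extensions of ${\rm At}(Y)$ by $\mathcal{O}_M$ whose sub- and quotient-bundles match. That inference is invalid: such extensions are classified by ${\rm Ext}^1\bigl({\rm At}(Y),\mathcal{O}_M\bigr)$, which need not vanish, so isomorphisms of the two ends do not ``glue'' to an isomorphism of the middle terms --- your own first part exhibits the phenomenon, since $\mathcal{O}\oplus\mathcal{O}(2)$ and $\mathcal{O}(1)\oplus\mathcal{O}(1)$ are non-isomorphic extensions of $\mathcal{O}(2)$ by $\mathcal{O}$ over $\C\!P^1$. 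What is required is an actual bundle map relating the two sides, and constructing it is the real content of the theorem. The paper builds it upstairs rather than over $M$: by Corollary \ref{cor:ro-hyper_from_ro-quatern} the ${\rm GL}(2)$-action on $P$ is twistorial with twistor space $(L\oplus L)\setminus0$, giving maps $\psi_{\ell}:P\to(L\otimes\ell)\setminus0$ with $\psi_{\ell}\circ R_a=R_a\circ\psi_{a(\ell)}$; then $\mathcal{T}P$ restricted to $P\times\{\ell\}$ is $\psi_{\ell}^*\bigl(T((L\otimes\ell)\setminus0)\bigr)$, and quotienting by ${\rm GL}(2)$ yields $\psi^*(\mathscr{T}Z)=\mathcal{T}P/{\rm GL}(2)$, whence the statement by taking direct images. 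Your closing sentence defers exactly this to Corollary \ref{cor:ro-hyper_from_ro-quatern} and Remark \ref{rem:ro-hyper_from_ro-quatern}, but those statements by themselves do not contain the construction; the equivariant family $\{\psi_{\ell}\}$ and the resulting identification over $Y$ must be produced, and the central-generator bookkeeping (the factor $k$) you offer cannot substitute for it.
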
 
\noindent 
(Note that, we can take for $\mathcal{L}$ the anticanonical line bundle of $Z$ and, as the obstruction to the existence of $L$ is topological, 
by passing to an open neighbourhood of each point of $M$, we can always find a line bundle $L$\,, as in the statement of Theorem \ref{thm:global_ro_M}\,.) 
\begin{proof}  
Note that, $\mathscr{T}Z=T(L\setminus0)/(\C\!\setminus\{0\})$\,. Also, the action of ${\rm GL}(2)$ on $P$ is twistorial, 
corresponding to its obvious right action on the twistor space $Z_P=(L\oplus L)\setminus0$ of $P$. 
Consequently, for the second statement, it is sufficient to prove that $\psi^*(\mathscr{T}Z)=\mathcal{T}P/{\rm GL}(2)$\,.\\ 
\indent 
As $Z_P=(L\oplus L)\setminus0$ we have a projection from it onto $Z\times\C\!P^1$ (the projectivisation of $L\oplus L$).  
Let $\p_1$ and $\p_2$ be the projections onto the corresponding factors of $Z\times\C\!P^1$. 
Then $Z_P=(\p_1^{\,*}L)\otimes\p_2^{\,*}\bigl(\ol(-1)\bigr)$\,, where $\ol(-1)$ is the tautological line bundle over $\C\!P^1$. 
Therefore, for any $\ell\in\C\!P^1$, the fibre of the projection from $Z_P$ onto $\C\!P^1$ is $(L\otimes\ell)\setminus0$\,.  
But this is the same with the image of the corresponding map from $P$ to $Z_P$ (the existence of these maps is a consequence 
of Corollary \ref{cor:ro-hyper_from_ro-quatern}\,).  Denote by $\psi_{\ell}:P\to(L\otimes\ell)\setminus0$ the obtained map, 
and, note that, for any $a\in{\rm GL}(2)$\,, we have 
$\psi_{\ell}\circ R_a=R_a\circ\psi_{a(\ell)}$\,, where $R_a$ denotes the (right) translation by $a$ on $P$ and $Z_P$\,.\\ 
\indent 
Now, note that, $\mathcal{T}P$ is the bundle over $P\times\C\!P^1$ whose restriction to $P\times\{\ell\}$\,, for any $\ell\in\C\!P^1$, 
is the quotient of $TP$ through ${\rm ker}\dif\!\psi_{\ell}$ (where we have identified $P$ and $P\times\{\ell\}$\,).  
Equivalently, $\mathcal{T}P$ is characterised by the fact that its restriction to $P\times\{\ell\}$\,, for any $\ell\in\C\!P^1$, 
is the pull back by $\psi_{\ell}$ of the tangent bundle of $(L\otimes\ell)\setminus\{0\}$\,. It follows that 
$\mathcal{T}P/{\rm GL}(2)=\psi^*\bigl(T(L\setminus0)/(\C\!\setminus\{0\})\bigr)$ and the proof is complete. 
\end{proof}  

\begin{rem} \label{rem:about_P}
1) With the same notations as in Theorem \ref{thm:global_ro_M}\,, we, also, have that $TP/{\rm GL}(2)$ is a $\r$-quaternionic manifold 
and its twistor space is $\mathscr{T}Z$ (compare \cite{Pan-q_integrab} and the references therein).\\ 
\indent 
2) Let $\r_P:E_P\to TP$ be the morphism of vector bundles giving (up to integrability) the $\r$-quaternionic structure of $P$.  
The ${\rm GL}(2)$-invariance of the $\r$-hypercomplex structure of $P$ implies that we can pass to quotients, thus, 
obtaining a morphism $\widetilde{E}\to\widetilde{T}M$ of vector bundles over $M$, 
where $\widetilde{E}=E_P/{\rm GL}(2)$ and $\widetilde{T}M=TP/{\rm GL}(2)$\,. Furthermore, 
$\widetilde{E}=H\otimes\widetilde{F}$ for some vector bundle $\widetilde{F}$, and we denote by $\r_M$ 
the canonical morphism of vector bundles from $\widetilde{T}M$ onto $TM$ (with kernel ${\rm Ad}P$). 
Consequently, we have the following diagram 
\begin{equation}  \label{diagram:tilde} 
\begin{gathered} 
\xymatrix{ 
0 \ar[r]     &     {\rm End}H  \ar[d]_{=} \ar[r]      &     \widetilde{E}   \ar[d] \ar[r]     &    E \ar[d]^{\r} \ar[r]      &    0              \\
0 \ar[r]     &     {\rm Ad}P    \ar[r]                  &        \widetilde{T}M   \ar[r]_{\r_M}     &     TM   \ar[r]     &          0                           
   } 
\end{gathered} 
\end{equation} 
where recall that $E=H\otimes F$, with $\r:E\to TM$ giving the $\r$-quaternionic structure of $M$. 
Furthermore, we have a canonical exact sequence 
\begin{equation} \label{e:for_tilde} 
0\longrightarrow H^*\longrightarrow\widetilde{F}\longrightarrow F\longrightarrow 0\;, 
\end{equation}   
which when tensorised with $H$ gives the first row of \eqref{diagram:tilde}\,.\\ 
\indent 
3) Facts contained in this section, up to now (in particular, Theorem \ref{thm:global_ro_M}\,), admit more general formulations. 
The degree of generality, chosen by us, is due to the purposes of this paper. 
\end{rem}

\section{On tame $\r$-quaternionic manifolds} \label{section:2} 

\indent 
In this section, the notations are as in Section \ref{section:1}\,. We start with a useful result which is, also, interesting in itself, 
as it can be seen as an extension of both the Ward transformation and the filtration given by the Birkhoff--Grothendieck theorem. 

\begin{prop} \label{prop:extended_Ward} 
Let $M$ be a $\r$-quaternionic manifold with twistor space $Z$. Let $\Fa$ be a vector bundle over $Z$ whose restriction 
to each twistor sphere has the same isomorphism type $\bigoplus_{j=1}^{j=k}a_j\ol(n_j)$\,, for some $k\in\mathbb{N}\setminus\{0\}$\,, 
and $a_j\in\mathbb{N}\setminus\{0\}$\,, $n_j\in\mathbb{Z}$\,, $j=1,\ldots,k$\,, with $n_j\geq n_{j+1}+2$\,, for any $j=1,\ldots,k-1$\,.\\ 
\indent 
Then there exists a unique filtration $\Fa^1\subseteq\cdots\subseteq\Fa^k=\Fa$ whose restriction to each twistor sphere $t\subseteq Z$ 
is the filtration of $\Fa|_t$ given by the Birkhoff--Grothendieck theorem.  
\end{prop}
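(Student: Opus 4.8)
The plan is to prove uniqueness first and then existence, the latter through a Ward-type direct-image construction on the double fibration \eqref{e:td} followed by a descent argument in which the gap hypothesis $n_j\geq n_{j+1}+2$ is essential. For uniqueness I would first record that the filtration of $\Fa|_t$ furnished by the Birkhoff--Grothendieck theorem is canonically determined by $\Fa|_t$, independently of any splitting: its $j$-th term is the image of the evaluation map $H^0\bigl(t,\Fa|_t\otimes\ol(-n_j)\bigr)\otimes\ol(n_j)\to\Fa|_t$, because $\ol(n_i-n_j)$ is globally generated for $i\leq j$ and has no sections for $i>j$. Since $\psi$ is surjective, through each $z\in Z$ there passes a twistor sphere; hence two global filtrations of $\Fa$ restricting on every $t$ to the Birkhoff--Grothendieck filtration agree at each point and therefore coincide. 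Thus only the construction remains.

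For existence, set $r_j=\sum_{i=1}^{j}a_i=\rank\Fa^j$ and work, locally on $M$, with a line bundle $L$ over $Z$ whose restriction to each twistor sphere has Chern number $1$. On $Y$ I would form $\psi^*\Fa\otimes\psi^*L^{-n_j}$, whose restriction to each fibre $\p^{-1}(x)\cong t_x$ is $\bigoplus_i a_i\ol(n_i-n_j)$; as $h^0$ is then constant, Grauert's theorem shows that $V_j:=\p_*\bigl(\psi^*\Fa\otimes\psi^*L^{-n_j}\bigr)$ is locally free and commutes with base change. The counit of the adjunction, twisted by $\psi^*L^{n_j}$, yields a morphism $\Phi_j:\p^*V_j\otimes\psi^*L^{n_j}\to\psi^*\Fa$ which on each $\p^{-1}(x)$ is exactly the evaluation map above; hence $\Phi_j$ has constant rank $r_j$ and its image $\mathcal{S}_j\subseteq\psi^*\Fa$ is a subbundle restricting on each $\p^{-1}(x)$ to the pull-back of $\Fa^j|_{t_x}$.

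It then remains to descend $\mathcal{S}_j$, that is, to show $\mathcal{S}_j=\psi^*\Fa^j$ for a subbundle $\Fa^j\subseteq\Fa$ over $Z$. Since $\psi^*\Fa$ carries the canonical flat partial connection along the fibres of $\psi$, this reduces to the vanishing of the associated second fundamental form $\beta:\mathcal{S}_j\to(\psi^*\Fa/\mathcal{S}_j)\otimes(\ker\dif\!\psi)^*$. I would verify this by restricting $\beta$ to the fibres of $\p$: on each $\p^{-1}(x)\cong t_x$ it becomes a global section of $\mathcal{H}om\bigl(\Fa^j,\Fa/\Fa^j\bigr)|_{t_x}\otimes(\ker\dif\!\psi)^*|_{t_x}$. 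Here $(\ker\dif\!\psi)^*|_{t_x}\cong L|_{t_x}\otimes(\text{trivial})$ has degree one on each summand, whereas $\mathcal{H}om\bigl(\Fa^j,\Fa/\Fa^j\bigr)|_{t_x}=\bigoplus_{i\leq j<l}a_ia_l\,\ol(n_l-n_i)$ with $n_l-n_i\leq n_{j+1}-n_j\leq-2$. Thus the relevant bundle is a sum of copies of $\ol(n_l-n_i+1)$ with $n_l-n_i+1\leq-1<0$, so it has no nonzero sections and $\beta|_{t_x}=0$; as the fibres of $\p$ cover $Y$, we get $\beta\equiv0$. Consequently $\mathcal{S}_j$ is parallel along the (connected) fibres of $\psi$ and descends to a subbundle $\Fa^j\subseteq\Fa$ restricting to the Birkhoff--Grothendieck term on every $t$. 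The inclusions $\Fa^j\subseteq\Fa^{j+1}$ and the equality $\Fa^k=\Fa$ are read off fibrewise, and the local constructions glue by the uniqueness already established.

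The main obstacle is precisely this descent step: the direct-image part and the fibrewise canonicity are formal, but the vanishing of $\beta$ genuinely needs the spacing $n_j\geq n_{j+1}+2$. It is exactly this gap, combined with the degree-one twist coming from $(\ker\dif\!\psi)^*|_{t_x}$, that forces $\mathcal{H}om\bigl(\Fa^j,\Fa/\Fa^j\bigr)\otimes(\ker\dif\!\psi)^*$ to be negative on each twistor sphere; with a gap of only one the summand $\ol(0)$ would appear, sections could survive, and the argument would break down.
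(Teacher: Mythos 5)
Your proof is correct, and at the level of strategy it matches the paper's: both construct the fibrewise Birkhoff--Grothendieck filtration on $Y$ and then use the gap hypothesis to push it down to $Z$. But the implementation of the crucial descent step is genuinely different. The paper first normalises (tensoring $\Fa$ with a power of $L$ so that $n_1=0$, hence $n_j\le-2$ for $j\ge2$), takes the direct image by $\p$ of the first term $\widetilde{\Fa}^1$ together with the anti-self-dual $\r$-connection induced by the flat partial connection, converts this datum back into a subbundle $\Fa^1\subseteq\Fa$ by citing the Ward transformation of \cite{Pan-hmhKPW}\,, and then proceeds inductively on $\Fa/\Fa^1$. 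You never leave $Y$ and never invoke the Ward transformation: you build all the subbundles $\mathcal{S}_j\subseteq\psi^*\Fa$ at once (your Grauert-plus-evaluation construction is exactly what the paper's unproved assertion of the filtration $\widetilde{\Fa}^1\subseteq\cdots\subseteq\widetilde{\Fa}^k$ amounts to), and you descend each $\mathcal{S}_j$ by showing that its second fundamental form along $\ker\dif\!\psi$ vanishes, being over each $\p$-fibre a holomorphic section of a bundle whose summands $\ol(n_l-n_i+1)$\,, $i\le j<l$\,, all have negative degree. That vanishing is precisely what is hidden inside the paper's phrase ``the direct image of the partial connection gives an anti-self-dual $\r$-connection \dots which corresponds, through the Ward transformation, with a vector subbundle of $\Fa$'', and it is also exactly where the paper uses $n_j\le-2$\,; so your argument can be read as an unpacking of the paper's black box. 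What each buys: the paper's proof is shorter and exhibits the proposition as a literal extension of the Ward transformation (its stated motivation), while yours needs no normalisation and no induction, proves uniqueness explicitly (the paper leaves it implicit), and makes transparent that the hypothesis $n_j\ge n_{j+1}+2$ enters exactly once, through the degree-one twist coming from the fact that $(\ker\dif\!\psi)^*$ restricted to any $\p$-fibre is a direct sum of copies of $\ol(1)$\,.

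One caveat, which applies equally to the paper: your parenthetical ``(connected)'' does real work, since the fibres of $\psi$ need not be connected (the paper makes such an assumption explicit where it is needed, e.g.\ in Theorem \ref{thm:degen_qK}\,). Vanishing of the second fundamental form gives constancy of $\mathcal{S}_j$ only along connected components of the fibres of $\psi$\,, so, strictly, the descent and the subsequent gluing by uniqueness should be carried out on saturated neighbourhoods of twistor spheres where this connectedness holds --- which is the same tacit localisation the paper performs in the first sentence of its proof and inside its appeal to the Ward transformation. This is not a defect of your proof relative to the paper's; it is just worth stating.
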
  
\begin{proof} 
As it is sufficient to prove that this holds in an open neighbourhood of each twistor sphere, we may suppose that there exists a line bundle $L$ 
over $Z$ whose restriction to each twistor sphere has Chern number $1$\,. Further, by tensorising $\Fa$ with a power of $L$\,, if necessary, 
we may suppose $n_1=0$\,, and, thus, $n_j\leq-2$\,, for any $j=2,\ldots,k$\,.\\ 
\indent 
Now, $\psi^{*\!}\Fa$ is endowed with a flat partial connection over ${\rm ker}\dif\!\psi$\,, 
and admits a filtration $\widetilde{\Fa}^1\subseteq\cdots\subseteq\widetilde{\Fa}^k=\psi^{*\!}\Fa$ whose restriction to each fibre of $\p$  
is the filtration of the restriction of $\psi^{*\!}\Fa$ to that fibre, given by the Birkhoff--Grothendieck theorem.\\ 
\indent 
As $n_j\leq-2$\,, for any $j=2,\ldots,k$\,, the direct image by $\p$ of the partial connection, over ${\rm ker}\dif\!\psi$\,, on $\psi^{*\!}\Fa$ 
gives an anti-self-dual $\r$-connection on the direct image by $\p$ of $\widetilde{\Fa}^1$ which corresponds, through the Ward transformation, 
with a vector subbundle $\Fa^1$ of $\Fa$. The proof quickly follows, inductively.  
\end{proof} 

\begin{defn} \label{defn:tame_ro_q} 
Let $M$ be a $\r$-quaternionic manifold and let $\mathcal{L}$ be any line bundle on the twistor space $Z$ of $M$ 
whose restriction to some twistor sphere is nontrivial; denote $\mathscr{T}Z=T(\mathcal{L}\setminus0)/(\C\!\setminus\{0\})$\,.\\ 
\indent 
We say that $M$ is \emph{tame, with respect to $\mathcal{L}$}\,, if there exists a filtration 
$\mathscr{T}^1Z\subseteq\cdots\subseteq\mathscr{T}^kZ=\mathscr{T}Z$ 
whose restrictiction to each twistor sphere $t\subseteq Z$ is the filtration of $\mathscr{T}Z|_t$ given by the Birkhoff--Grothendieck theorem. 
\end{defn} 

\indent 
Let $M$ be a tame $\r$-quaternionic manifold, with respect to some line bundle $\mathcal{L}$ over $Z$. 
By Theorem \ref{thm:global_ro_M} the filtration of $\mathscr{T}Z$ corresponds to a filtration (compare \cite{fq}\,) 
$0=\widetilde{T}^0M\subseteq\cdots\subseteq\widetilde{T}^lM=\widetilde{T}M$, for some $l\in\mathbb{N}$\,, where recall that 
$\widetilde{T}M=TP/{\rm GL}(2)$\,, with $P$ is the frame bundle of the direct image by $\p$ of $\psi^*L$, where $L$ is any line bundle, 
locally defined over $Z$, whose restrictions to the twistor spheres have Chern number $1$ and such that 
$\mathcal{L}=L^p$\,, for some $p\in\mathbb{Z}\setminus\{0\}$\,. In particular, $P$ is of constant type.\\ 
\indent 
Furthermore, there exists a unique decreasing sequence of natural numbers 
$p_j$\,, $j=1,\ldots,l$\,, such that $\widetilde{T}^jM/\widetilde{T}^{j-1}M=\bigl(\odot^{p_j}H\bigr)\otimes\widetilde{F}_j$\,, 
for some vector bundles $\widetilde{F}_j$\,, $j=1,\ldots,l$\,, where $\odot$ denotes the symmetric power.\\ 

\begin{defn} \label{defn:fundamental_monopoles} 
Let $M$ be a $\r$-quaternionic manifold which is tame with respect to some line bundle $\mathcal{L}$ over its twistor space $Z$.\\ 
\indent  
The anti-self-dual $\r$-connections on $\widetilde{F}_j$ obtained by applying the Ward transformation to 
$L^{-p_j}\otimes\bigl(\mathscr{T}^jZ/\mathscr{T}^{j-1}Z\bigr)$, 
$j=1,\ldots,l$, are called the \emph{fundamental mo\-no\-poles of $M$, with respect to $\mathcal{L}$}\,. 
\end{defn} 
 
\indent 
Suppose now that $M$ is a $\r$-quaternionic manifold of constant type. 
The canonical filtration given by the Birkhoff--Grothendieck theorem gives 
the (increasing) \emph{canonical filtration} of the tangent bundle of $M$\,:
$0=T^0M\subseteq\cdots\subseteq T^kM=TM$, where $k\in\mathbb{N}$\,. Accordingly, 
we have a canonical filtration $0=E^0\subseteq\cdots\subseteq E^k=E$ such that $\r\bigl(E^j\bigr)\subseteq T^jM$, for $j=0,\ldots,k$\,. 
Note that, the canonical filtration of $E$ is increasing if and only if $\r:E\to TM$ is surjective; 
equivalently, the Birkhoff--Grothendieck decomposition of the normal bundles of the twistor spheres contains no trivial term 
(more precisely, only positive Chern numbers are appearing in that decomposition).\\ 
\indent 
With $L$ a line bundle (locally defined) on $Z$ whose restriction to each twistor sphere has Chern number $1$\,, and 
assuming $k\in\mathbb{N}\setminus\{0\}$ (equivalently, $M$ of positive dimension), there exists a unique decreasing sequence of natural numbers 
$n_j$\,, $j=1,\ldots,k$\,, such that $T^jM/T^{j-1}M=\bigl(\odot^{n_j}H\bigr)\otimes F_j$\,, 
for some vector bundles $F_j$\,, $j=1,\ldots,k$\,.\\ 
\indent 
If the normal exact sequeces of the twistor spheres split then $\widetilde{T}M$ admits an increasing filtration with $k$ or $k+1$ terms, 
according to whether or not there exists $j_0\in\{k-1,k\}$ such that $n_{j_0}=1$\,. 
Moreover, for any $j\in\{1,\ldots,k\}\setminus\{j_0\}$ the corresponding term of the filtration of $\widetilde{T}M$ is given by $T^jM$\,; 
in particular, $p_j=n_j$ and $\widetilde{F}_j=F_j$\,. Note that, if $0\in\{n_j\}_{j=1,\ldots,k}$ then $\widetilde{T}^{k+1}M/\widetilde{T}^kM=T^kM/T^{k-1}M$.  

\begin{rem}  
1) The canonical filtrations of $TM$ and $E$\,, and, if $k$ is nonzero, the natural numbers $n_j$\,, $j=1,\ldots,k$\,, are independent of $L$\,.\\ 
\indent 
2) By Proposition \ref{prop:for_global_ro_M} and Theorem \ref{thm:global_ro_M} we may, for example, assume $TP_j/{\rm GL}(r_j)$ globally defined 
on $M$, where $P_j$ is the frame bundle of $F_j$ and $r_j=\rank F_j$\,. Therefore, by a slight abuse of terminology, we may speak of $\r$-connections 
on $F_j$ (although $F_j$ may be defined only locally on $M$).\\ 
\indent 
3) If there exists $j_0\in\{k-1,k\}$ such that $n_{j_0}=1$ then 
$\widetilde{T}^{j_0}M/\widetilde{T}^{j_0-1}M=H\otimes\widetilde{F}_{j_0}$ for some vector bundle $\widetilde{F}_{j_0}$ 
and we have an exact sequence  $$0\longrightarrow H^*\longrightarrow \widetilde{F}_{j_0}\longrightarrow F_{j_0}\longrightarrow 0$$ 
which splits if and only if $H$ admits a connection over $M$.    
\end{rem}  

\begin{cor} \label{cor:fundamental_monopoles} 
Let $M$ be a $\r$-quaternionic manifold of constant type, and let $\mathcal{L}$ be any line bundle on the twistor space $Z$ of $M$ 
whose restriction to some twistor sphere is nontrivial; denote $\mathscr{T}Z=T(\mathcal{L}\setminus0)/(\C\!\setminus\{0\})$\,.\\ 
\indent 
Suppose that $M$ is tame, with respect to $\mathcal{L}$\,, and the normal exact sequences of the twistor spheres split.\\  
\indent 
\quad{\rm (i)} If there exists $j_0\in\{k-1,k\}$ such that $n_{j_0}=1$ then $\widetilde{F}_{j_0}$ and $F_j$\,, for any $j\in\{1,\ldots,k\}\setminus\{j_0\}$\,,  
are endowed with anti-self-dual $\r$-connections.\\ 
\indent 
\quad{\rm (ii)} If there does not exist $j_0\in\{k-1,k\}$ such that $n_{j_0}=1$ then $H$ and $F_j$\,, for any $j\in\{1,\ldots,k\}$\,, 
are endowed with anti-self-dual $\r$-connections.  
\end{cor}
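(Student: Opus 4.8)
The plan is to read off the statement of Corollary \ref{cor:fundamental_monopoles} as a direct application of Definition \ref{defn:fundamental_monopoles} together with the structural analysis of $\widetilde{T}M$ that precedes it. The fundamental monopoles are, by definition, the anti-self-dual $\r$-connections obtained by applying the Ward transformation to the line-bundle-twisted graded pieces $L^{-p_j}\otimes\bigl(\mathscr{T}^jZ/\mathscr{T}^{j-1}Z\bigr)$, and these land on the bundles $\widetilde{F}_j$. So the whole content of the corollary is to identify, under the two stated hypotheses (constant type, and splitting of the normal exact sequences), \emph{which} bundles the $\widetilde{F}_j$ actually are, and to verify that the hypotheses guarantee that a filtration $\mathscr{T}^1Z\subseteq\cdots\subseteq\mathscr{T}^kZ$ with the required Birkhoff--Grothendieck property genuinely exists so that Proposition \ref{prop:extended_Ward} and Definition \ref{defn:fundamental_monopoles} apply.

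\emph{First} I would invoke the paragraph immediately following Definition \ref{defn:tame_ro_q}, which analyses $\widetilde{T}M$ when $M$ is of constant type and the normal exact sequences split. That discussion shows that $\widetilde{T}M$ then carries an increasing filtration with $k$ or $k+1$ terms, distinguishing exactly the two cases of the corollary according to whether there is a $j_0\in\{k-1,k\}$ with $n_{j_0}=1$. In this splitting situation the graded pieces of $\mathscr{T}Z$ are sums of bundles of the form $\bigl(\odot^{p_j}H\bigr)\otimes\widetilde{F}_j$ with distinct, gap-$\geq2$ exponents, so the constant-type hypothesis is precisely what makes $\mathscr{T}Z|_t$ have a fixed isomorphism type meeting the hypotheses of Proposition \ref{prop:extended_Ward}; hence the filtration of Definition \ref{defn:tame_ro_q} exists and $M$ is automatically tame with respect to $\mathcal{L}$. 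I would then simply match the graded pieces: for $j\neq j_0$ we have $\widetilde{F}_j=F_j$ and $p_j=n_j$, so the fundamental monopole sits on $F_j$; in case (i) the distinguished term $j_0$ contributes the extra bundle $\widetilde{F}_{j_0}$, while in case (ii), where no $n_{j_0}=1$ occurs, the bottom of the filtration \eqref{e:for_tilde} forces a graded piece isomorphic to $H$ (equivalently $\odot^1 H$ with trivial $\widetilde{F}$), so $H$ itself acquires an anti-self-dual $\r$-connection.

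\emph{The main obstacle} I anticipate is the bookkeeping around the bundle $H$ in case (ii) and around $\widetilde{F}_{j_0}$ versus $F_{j_0}$ in case (i): one must make sure that the line bundle $L$, which is only locally defined over $Z$, does not enter the final conclusion, since the monopoles are meant to be geometrically intrinsic. Here I would lean on Remark \ref{rem:about_P}(1)--(2), part 2) of the Remark preceding the corollary (the $n_j$ and the canonical filtrations are independent of $L$), and Proposition \ref{prop:for_global_ro_M}, to argue that although $H$ and the $F_j$ may only be locally defined, the associated $\r$-connections glue by locally constant cocycles and descend to globally meaningful anti-self-dual $\r$-connections. Once the independence of $L$ is in hand, each conclusion is just the statement that the relevant $L^{-p_j}$-twisted graded piece has restriction to each twistor sphere a sum of trivial bundles $\ol(0)$ (because the exponent $p_j$ is exactly matched), so its Ward transform is a genuine bundle over $M$ carrying an anti-self-dual $\r$-connection.

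\emph{In summary}, I would structure the proof as: (1) note that the constant-type-plus-splitting hypotheses place us in the scenario of the paragraph after Definition \ref{defn:tame_ro_q}, so the required filtration of $\mathscr{T}Z$ exists and $M$ is tame; (2) apply Definition \ref{defn:fundamental_monopoles} to obtain anti-self-dual $\r$-connections on each $\widetilde{F}_j$; (3) identify $\widetilde{F}_j=F_j$ for $j\neq j_0$ and handle the distinguished index $j_0$ (present in case (i)) or the bottom piece $H$ (in case (ii)) separately, citing \eqref{e:for_tilde} and Remark \ref{rem:ro-hypercomplex_first}--style independence-of-$L$ arguments to conclude.
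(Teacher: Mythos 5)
Your overall route coincides with the paper's (its proof is exactly the Ward transformation plus Theorem \ref{thm:global_ro_M}, with the graded pieces having been identified in the paragraphs preceding the corollary), but your handling of case (ii) contains a genuine error. You identify the extra graded piece of $\widetilde{T}M$ as ``isomorphic to $H$ (equivalently $\odot^1H$ with trivial $\widetilde{F}$)''. It is not: by diagram \eqref{diagram:tilde} the extra term is ${\rm Ad}P\cong{\rm End}H=H\otimes H^*$, of rank $4$, not $2$; over a twistor sphere $t$ through $x$ the corresponding subbundle of $\mathscr{T}Z|_t$ is canonically $L|_t\otimes H_x^*$ (this is what the summand $(L\oplus L)|_t$ of Theorem \ref{thm:global_ro_M} is, once the identification is kept track of), so for this piece $p=1$ and $\widetilde{F}=H^*$. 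The distinction matters, because the Ward transformation of Definition \ref{defn:fundamental_monopoles} produces an anti-self-dual $\r$-connection on the $\widetilde{F}$-factor only, never on the $\odot^{p}H$-factor. If $\widetilde{F}$ were trivial, as you assert, the Ward transform of $L^{-1}\otimes\bigl(\mathscr{T}^jZ/\mathscr{T}^{j-1}Z\bigr)$ would be a flat connection on a trivial bundle, and assertion (ii) about $H$ would not follow at all. The correct statement is that $\widetilde{F}=H^*$ here, so the Ward transform yields an anti-self-dual $\r$-connection on $H^*$, and hence, by dualizing, on $H$.

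A second, less damaging, error: you claim that constant type plus the splitting of the normal sequences makes $M$ ``automatically tame'' via Proposition \ref{prop:extended_Ward}. That proposition requires the Chern numbers occurring in the Birkhoff--Grothendieck decomposition to satisfy $n_j\geq n_{j+1}+2$, and constant type does not ensure this for $\mathscr{T}Z|_t$\,: its Chern numbers are the $n_j$ together with $1$ (coming from $(L\oplus L)|_t$), so already $n_1=2$ (for instance, $Z$ a surface with $Nt=\ol(2)$, as in the Hitchin case discussed in the paper) gives $\mathscr{T}Z|_t=\ol(2)\oplus\ol(1)\oplus\ol(1)$, where the gap is $1$ and Proposition \ref{prop:extended_Ward} is inapplicable. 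This is precisely why tameness is an explicit hypothesis of the corollary rather than a consequence of the other assumptions. Since tameness is assumed, this slip does not invalidate your conclusion, but the claim should be removed; as it stands, your step (1) asserts something false and makes the hypothesis look redundant.
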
 
\begin{proof} 
This is a consequence of the Ward transformation and Theorem \ref{thm:global_ro_M}\,. 
\end{proof} 

\subsection{On $\r$-hypercomplex manifolds of constant type} In this subsection $M$ is $\r$-hypercomplex.  
Then $Y=M\times\C\!P^1$ such that the projection from $Y$ onto $\C\!P^1$ factorises into $\psi$ followed by a surjective submersion $\chi:Z\to\C\!P^1$. 
For any $z\in\C\!P^1$, we denote by $\psi_z$ the restriction of $\psi$ to $M\times\{z\}$\,. Thus, for any $z\in\C\!P^1$, on identifying $M$ with $M\times\{z\}$\,, 
we have that $\psi_z:M\to N_z$ is a surjective submersion, where $N_z=\chi^{-1}(z)$ (compare the proof of Theorem \ref{thm:global_ro_M}\,). 
Let $L$ be the pull back by $\chi$ of the dual of the tautological bundle over $\C\!P^1$, and denote $\mathscr{T}Z=T(L\setminus0)/(\C\!\setminus\{0\})$\,. 

\begin{cor} \label{cor:BirGro} 
Suppose that, for some $k\in\mathbb{N}\setminus\{0\}$\,, there exist positive natural numbers $n_1>\ldots>n_k$ and 
an increasing filtration $0=\nb^0\subseteq\cdots\subseteq\nb^k={\rm ker}\dif\!\chi$ over $Z$ such that 
$L^{-n_j}\otimes\bigl(\nb^j/\nb^{j-1}\bigr)$ restricted to any twistor sphere is trivial, $j=1,\ldots,k$\,.\\ 
\indent 
Then $M$ is tame, with respect to $L$, and its fundamental monopoles are (essentially) obtained by applying the Ward transformation 
to $L^{-n_j}\otimes\bigl(\nb^j/\nb^{j-1}\bigr)$\,, $j=1,\ldots,k$\,. 
\end{cor}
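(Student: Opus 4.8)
The plan is to build the filtration of $\mathscr{T}Z$ required by Definition \ref{defn:tame_ro_q} directly out of the given filtration $\nb^\bullet$ of ${\rm ker}\dif\!\chi$, and then to read off the fundamental monopoles from its graded pieces. The starting point is a global (over $Z$) refinement of the sphere-wise exact sequence of Theorem \ref{thm:global_ro_M}. Since $L=\chi^*\ol(1)$ is pulled back from $\C\!P^1$, it is canonically flat along ${\rm ker}\dif\!\chi$; equivalently, pulling back the Euler sequence of $\C\!P^1$ by $\chi$ and comparing the Atiyah sequences of the principal $\C\!\setminus\{0\}$-bundles $L\setminus0$ and $\ol(1)\setminus0$ gives a canonical embedding of ${\rm ker}\dif\!\chi$ into $\mathscr{T}Z=T(L\setminus0)/(\C\!\setminus\{0\})$ yielding
\begin{equation*}
0\longrightarrow{\rm ker}\dif\!\chi\longrightarrow\mathscr{T}Z\longrightarrow L\oplus L\longrightarrow0\;.
\end{equation*}
As $\chi|_t$ is an isomorphism onto $\C\!P^1$ for each twistor sphere $t$, this restricts on $t$ to the sequence of Theorem \ref{thm:global_ro_M}\,, with $L|_t=\ol(1)$\,, $(L\oplus L)|_t=\ol(1)\oplus\ol(1)$ and $({\rm ker}\dif\!\chi)|_t=Nt$\,.

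Next I would take $\mathscr{T}^jZ$ to be the image of $\nb^j$ for the indices $j$ with $n_j\geq2$\,, the two top terms being ${\rm ker}\dif\!\chi=\nb^k$ and $\mathscr{T}Z$ when $n_k\geq2$ (giving $k+1$ terms), whereas if $n_k=1$ the filtration jumps directly from $\nb^{k-1}$ to $\mathscr{T}Z$ (giving $k$ terms). On a twistor sphere $t$ the hypothesis gives $(\nb^j/\nb^{j-1})|_t=\ol(n_j)^{\oplus m_j}$ with $m_j=\rank(\nb^j/\nb^{j-1})$; since the $n_j$ strictly decrease we have $H^1(\ol(n_i-n_j))=0$ for $i<j$\,, so the restricted filtration splits and $\nb^\bullet|_t$ is the Birkhoff--Grothendieck filtration of $Nt$\,. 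Adjoining the degree-$1$ quotient $L\oplus L$ then reproduces the Birkhoff--Grothendieck filtration of $\mathscr{T}Z|_t$\,: the degree-$1$ summand coming from $Nt$ (present precisely when $n_k=1$) merges with $\ol(1)\oplus\ol(1)$\,, which is exactly the $k$-versus-$(k+1)$-term dichotomy recorded above. Hence the filtration restricts to the Birkhoff--Grothendieck filtration on every twistor sphere and $M$ is tame with respect to $L$\,.

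Finally, the graded pieces are $\mathscr{T}^jZ/\mathscr{T}^{j-1}Z=\nb^j/\nb^{j-1}$\,, of type $\ol(n_j)^{\oplus m_j}$ on each twistor sphere, so that $p_j=n_j$; by Definition \ref{defn:fundamental_monopoles} the associated fundamental monopole is the anti-self-dual $\r$-connection obtained by applying the Ward transformation to $L^{-n_j}\otimes(\nb^j/\nb^{j-1})$\,, which is legitimate because this bundle is trivial on every twistor sphere by hypothesis. The single discrepancy is the top, degree-$1$ graded piece, which carries the extra factor $L\oplus L$ with $L^{-1}\otimes(L\oplus L)$ trivial (hence a flat contribution); this is the reason the monopoles are recovered only \emph{essentially}\,. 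The delicate point, and the step I expect to require the most care, is the sphere-wise identification in the second paragraph: one must establish the canonical global inclusion ${\rm ker}\dif\!\chi\hookrightarrow\mathscr{T}Z$ and then match ranks and degrees with the Birkhoff--Grothendieck decomposition across both cases, the merging when $n_k=1$ being the only genuinely nontrivial bookkeeping.
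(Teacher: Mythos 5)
Your proposal is correct and follows essentially the same route as the paper: the paper's proof likewise rests on the exact sequence $0\longrightarrow{\rm ker}\dif\!\chi\longrightarrow\mathscr{T}Z\longrightarrow L\oplus L\longrightarrow0$ induced by $\dif\!\chi$ and then pushes the filtration of ${\rm ker}\dif\!\chi$ into $\mathscr{T}Z$. The paper states this in three sentences, leaving implicit what you spell out explicitly -- the canonical inclusion ${\rm ker}\dif\!\chi\subseteq\mathscr{T}Z$ via the flatness of $L$ along the fibres of $\chi$, the $k$-versus-$(k+1)$-term bookkeeping when $n_k=1$, and the flat $L\oplus L$ contribution accounting for the word ``essentially'' (the latter point being recorded in the paper only in the remark following the corollary).
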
 
\begin{proof} 
Note that, the normal bundle of each twistor sphere $t\subseteq Z$ is isomorphic to $({\rm ker}\dif\!\chi)|_t$\,. 
Also, we have an exact sequence $0\longrightarrow{\rm ker}\dif\!\chi\longrightarrow\mathscr{T}Z\longrightarrow\chi^*(L\oplus L)\longrightarrow0$\,, 
induced by $\dif\!\chi$\,. Consequently, the filtration of ${\rm ker}\dif\!\chi$ determines a filtration of $\mathscr{T}Z$ as required.  
\end{proof} 
 
\begin{rem} 
1) If there exists $j_0\in\{k-1,k\}$ such that $n_{j_0}=1$ then 
$\widetilde{F}_{j_0}=H^*\oplus F_{j_0}$ (because $H$ admits the trivial flat connection) and its anti-self-dual $\r$-connection 
is, accordingly, a direct sum.\\ 
\indent 
2) The filtration of ${\rm ker}\dif\!\psi$ corresponds to increasing filtrations 
$$0=T^0N_z\subseteq T^1N_z\subseteq\cdots\subseteq T^kN_z=TN_z$$ and, consequently, to isomorphisms $F_j=\psi_z^*\bigl(T^jN_z/T^{j-1}N_z\bigr)$ 
under which the anti-self-dual $\r$-connections of $F_j$\,, $j=1,\ldots,k$\,, are induced by the partial Bott connections of the foliations determined 
by $\psi_z$\,, $z\in\C\!P^1$ (compare \cite{Pan-hmhKPW}\,).\\ 
\indent  
3) If $k=1$ then $TM=(\odot^{n_1}H)\otimes F_1$ and the tensor product of the trivial connection and the anti-self-dual $\r$-connection of $F_1$ 
is just the Obata $\r$-connection from \cite{Pan-hmhKPW}\,.\\ 
\indent 
4) With the same notations as in Corollary \ref{cor:ro-hyper_from_ro-quatern}\,, from Remark \ref{rem:ro-hyper_from_ro-quatern} 
and Corollary \ref{cor:BirGro} it follows that for any tame $\r$-quaternionic manifold whose twistor spheres have positive normal bundles, 
also, the $\r$-hypercomplex manifold $P$ is tame.\\ 
\indent 
5) If $Z$ is surface then obviously $M$ is of constant type. The case $n_1=2$ was considered in \cite{Hit-complexmfds}\,. With our approach, 
the relevant Weyl connection can be quickly obtained by building, firstly, a suitable bracket (see \cite{Pan-qgfs} and the references therein) on~$E$.\\ 
\indent 
If $n_1\in\mathbb{N}$ and $M$ is $\r$-hypercomplex then $M$ is tame with one possibly nontrivial fundamental monopole,  
determining the Obata $\r$-connection \cite{Pan-hmhKPW} of $M$\,; the case $n_1=2$ was considered in \cite{GauTod}\,.     
\end{rem}

\section{Quaternionic manifolds} \label{section:3} 

\indent 
The quaternionic manifolds are characterised, among the $\r$-quaternionic manifolds, 
by the fact that the Birkhoff--Grothendieck decompositions 
of the normal bundles of the twistor spheres contain only terms of Chern number $1$\,. Consequently, they are tame $\r$-qua\-ter\-ni\-onic manifolds 
with only one fundamental monopole. Also, note that, the dimension of any quaternionic manifold is even. Furthermore, for any line bundle, 
over the twistor space of a quaternionic manifold, whose restriction to some twistor sphere is nontrivial, the corresponding fundamental monopole 
is a (classical) connection.\\ 
\indent 
The `flat model' is the Grassmannian ${\rm Gr}_2(n+2)$ with twistor space $Z=\C\!P^{n+1}$ and $Y$ the flag manifold $F_{1,2}(n+2)$\,. In this case,  
the fundamental monopole, with respect to any nontrivial line bundle $\mathcal{L}$ on $Z$, (essentially) is the trivial connection on 
${\rm Gr}_2(n+2)\times\C^{\!n+2}$. 

\begin{thm}[compare \cite{Sal-dg_qm}\,,\,\cite{Pan-proj_ro}\,] \label{thm:quatern_flat} 
Let $M$ be a quaternionic manifold, $\dim M=2n$\,, and let $Z$ be its twistor space. Then the following assertions are equivalent:\\ 
\indent 
{\rm (i)} There exists a line bundle $\mathcal{L}$ over $Z$ whose restriction to some twistor sphere is nontrivial and 
such that the fundamental monopole of $M$, with respect to $\mathcal{L}$\,, is flat.\\ 
\indent 
{\rm (ii)} There exists a twistorial local diffeomorphism from a covering space of $M$ to ${\rm Gr}_2(n+2)$\,. 
\end{thm}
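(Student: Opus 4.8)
The plan is to establish the two implications separately, using the explicit description of the fundamental monopole in the quaternionic case. Recall that, for quaternionic $M$ with $\dim M=2n$\,, one has $TM=H\otimes F$ with $\rank F=n$\,, and the unique fundamental monopole is a classical connection $\nabla$ on $\widetilde{F}$\,, where $\widetilde{F}$ has rank $n+2$ and sits in the exact sequence $0\to H^*\to\widetilde{F}\to F\to0$ of Remark \ref{rem:about_P}\,. For the flat model ${\rm Gr}_2(n+2)$ one has $\widetilde{F}={\rm Gr}_2(n+2)\times\C^{\!n+2}$ with $\nabla$ the trivial connection, $H^*$ the tautological $2$-plane bundle and $F$ the corresponding quotient, so that the assignment $x\mapsto H^*_x\subseteq\C^{\!n+2}$ is just the identity of ${\rm Gr}_2(n+2)$\,.

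To prove (i)$\,\Rightarrow\,$(ii)\,, suppose that $\nabla$ is flat. Passing to the universal covering $\widetilde{M}$\,, on which $\nabla$ has trivial holonomy, and using $\nabla$-parallel frames, we obtain a trivialisation $\widetilde{F}=\widetilde{M}\times\C^{\!n+2}$\,. The inclusion $H^*\hookrightarrow\widetilde{F}$ then becomes a field of $2$-planes $H^*_x\subseteq\C^{\!n+2}$\,, which defines a map
\[
f:\widetilde{M}\longrightarrow{\rm Gr}_2(n+2)\,,\qquad x\longmapsto H^*_x\,.
\]
It remains to check that $f$ is a twistorial local diffeomorphism.

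That $f$ is a local diffeomorphism follows by identifying $\dif\!f$ with the second fundamental form of $H^*$ in the flat bundle $\widetilde{M}\times\C^{\!n+2}$\,: this is the section of ${\rm Hom}(H^*,\widetilde{F}/H^*)={\rm Hom}(H^*,F)=H\otimes F=TM$ obtained by projecting $\nabla$ of a local section of $H^*$ onto $F$\,, and the compatibility of the fundamental monopole with the sequence $0\to H^*\to\widetilde{F}\to F\to0$ (equivalently, with $\r$) identifies it with the identity of $TM$\,; hence $\dif\!f$ is an isomorphism. For twistoriality, note that $f$ induces
\[
P(H^*)=Y\longrightarrow P\bigl(\C^{\!n+2}\bigr)=\C\!P^{n+1}\,,\qquad\ell\longmapsto\ell\,,
\]
sending a line $\ell\subseteq H^*_x$ to the corresponding line of $\C^{\!n+2}$\,. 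The crucial point, which I expect to be the main obstacle, is that this map is constant along the fibres of $\psi:Y\to Z$\,, so that it descends to a map $\widetilde{Z}\to\C\!P^{n+1}$ of twistor spaces, $\widetilde{Z}$ being the twistor space of $\widetilde{M}$\,. This is exactly where flatness is used: the infinitesimal deformations along a fibre of $\psi$ lie in ${\rm ker}\dif\!\psi$\,, and the vanishing of the curvature of $\nabla$ forces the developed line $\ell\subseteq\C^{\!n+2}$ to stay fixed. The induced map on twistor spaces is then holomorphic and, being fibrewise the standard identification of a twistor sphere with a projective line $P(H^*_x)\subseteq\C\!P^{n+1}$ and a local diffeomorphism on the base, it is a local biholomorphism; thus $f$ is twistorial.

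To prove (ii)$\,\Rightarrow\,$(i)\,, let $f:\widetilde{M}\to{\rm Gr}_2(n+2)$ be a twistorial local diffeomorphism from a covering $\widetilde{M}$ of $M$\,, and take for $\mathcal{L}$ the anticanonical line bundle of $Z$ (whose restriction to each twistor sphere has Chern number $n+2>0$\,, hence is nontrivial). Its pullback to the twistor space $\widetilde{Z}$ of $\widetilde{M}$ is the anticanonical bundle of $\widetilde{Z}$\,, which $f$ identifies with the anticanonical bundle $\ol(n+2)$ of $\C\!P^{n+1}$\,. As the fundamental monopole of the flat model with respect to $\ol(n+2)$ is the trivial, hence flat, connection, and twistorial local diffeomorphisms preserve fundamental monopoles, the pullback to $\widetilde{M}$ of the fundamental monopole of $M$ with respect to $\mathcal{L}$ is flat. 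Since curvature pulls back under the covering projection, the fundamental monopole of $M$ is itself flat, which is (i)\,.
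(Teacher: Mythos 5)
Your construction of the map is essentially the paper's first step: the paper also trivialises the monopole on a covering space and sends $x$ to the developed $2$-plane, except that it phrases this via the global section $\s$ of ${\rm Gr}_2\bigl(\widetilde{F}\bigr)$ rather than via $\widetilde{F}$ itself --- a point you gloss over, since $\widetilde{F}$ (unlike ${\rm Gr}_2\bigl(\widetilde{F}\bigr)$) need not exist globally on $M$, so ``trivial holonomy of $\nabla$ on the universal cover'' is not literally meaningful for $\widetilde{F}$. Your direction (ii)$\,\Rightarrow\,$(i), by naturality of the fundamental monopole under twistorial local diffeomorphisms and coverings, is acceptable. The genuine gap is in (i)$\,\Rightarrow\,$(ii), at exactly the two places where you substitute assertion for argument. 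First, the claim that $\dif\!f$ equals the second fundamental form of $H^*$ in $(\widetilde{F},\nabla)$ \emph{and} that this form is the identity of $TM$: the ``compatibility of the fundamental monopole with the sequence'' you invoke has not been established anywhere and is not a formal consequence of the exact sequence \eqref{e:for_tilde} --- it is a nontrivial property of the monopole. Second, and more seriously, the descent claim that $P(H^*)\to\C\!P^{n+1}$, $\ell\mapsto\ell$, is constant on the fibres of $\psi$. Flatness of a connection on $\widetilde{F}$ says nothing by itself about how the moving line $\ell_x\subseteq H^*_x\subseteq\widetilde{F}_x$ sits inside the parallel trivialisation as $x$ runs over $\p\bigl(\psi^{-1}(z)\bigr)$; for a generic flat connection the developed line would move. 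What forces constancy is not flatness but the fact that $\nabla$ is the \emph{Ward transform} of $L^{-1}\otimes\mathscr{T}Z$: along $\p\bigl(\psi^{-1}(z)\bigr)$ the evaluation maps $\widetilde{F}_x\to(L^*\otimes\mathscr{T}Z)_z$ are parallel, and $\ell_x$ is precisely the preimage of the line spanned by $\1_z$. Neither of these ingredients appears in your proposal, so the ``main obstacle'' you flag is indeed left unresolved.

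The paper's proof avoids both difficulties by working upstairs on $Z$ instead of downstairs on $M$: flatness of the fundamental monopole is equivalent, via the Ward transformation, to an isomorphism $\a:\mathscr{T}Z\to\mathcal{L}\otimes V$ defined on $Z$ itself, so the map $\phi_Z:Z\to PV$ induced by the nowhere-zero section $s=\a(\1)$ exists on $Z$ from the start and no descent through $\psi$ is ever needed; its differential is then computed from the identity $\nabla\1={\rm Id}_{\mathscr{T}Z}$ (the Euler-type identity for the flat $\r_Z$-connection of the scalar reduction determined by $\a$), which is exactly the statement your ``second fundamental form equals the identity'' was standing in for, and the fact that $\mathcal{L}$ has Chern number $1$ on twistor spheres makes $\phi_Z$ send twistor spheres to projective lines, yielding the twistorial local diffeomorphism on $M$. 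To repair your argument you would have to prove both of your unproved claims by unwinding the Ward transform along the surfaces $\p\bigl(\psi^{-1}(z)\bigr)$, which amounts to redoing the paper's twistor-space computation in a less convenient location.
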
 
\begin{proof} 
Let $\mathcal{L}$ be a line bundle over $Z$ whose restriction to some twistor sphere is nontrivial. 
With the same notations as in \eqref{e:for_tilde} and Corollary \ref{cor:fundamental_monopoles}\,, we have $j_0=k=1$ 
and $\widetilde{F}=\widetilde{F}_1$\,. Also, although $\widetilde{F}$ may exist only locally, ${\rm Gr}_2\bigl(\widetilde{F}\bigr)$ is globally defined 
over $M$, and the locally defined embeddings $H^*\subseteq\widetilde{F}$, given by \eqref{e:for_tilde}\,, define a (global) section $\s$ of 
${\rm Gr}_2\bigl(\widetilde{F}\bigr)$\,.\\ 
\indent 
The fundamental monopole of $M$, with respect to $\mathcal{L}$\,, induces a connection on ${\rm Gr}_2\bigl(\widetilde{F}\bigr)$ which, if (i) holds, 
is flat. Thus, assuming (i)\,, by passing to a covering space of $M$, if necessary, we have ${\rm Gr}_2\bigl(\widetilde{F}\bigr)=M\times{\rm Gr}_2(V)$\,, 
where $V$ is a vector space of dimension $n+2$\,. Hence, $\s_x=(x,\phi(x))$\,, for any $x\in M$\,, for some map $\phi:M\to{\rm Gr}_2(V)$\,.\\ 
\indent 
To complete the proof we have to show that $\phi$ is a twistorial local diffeomorphism. For this, by passing to an open neighbourhood of each point of $M$, 
if necessary, we may suppose that $\mathcal{L}$ restricted to any twistor sphere has Chern number $1$\,. Then the flatness of the fundamental monopole, 
with respect to $\mathcal{L}$\,, is equivalent to the existence of an isomorphism of vector bundles $\a:\mathscr{T}Z\to\mathcal{L}\otimes V$, where 
$\mathscr{T}Z=T(\mathcal{L}^*\setminus0)/(\C\!\setminus0)$\,.\\ 
\indent 
As $\mathcal{L}$ has rank $1$\,, the adjoint bundle of $\mathcal{L}^*\setminus0$ is trivial. Thus, we have an embedding $Z\times\C\subseteq\mathscr{T}Z$. 
Denote by $\1$ the section of $\mathscr{T}Z$ induced through this embedding by the section $x\mapsto(x,1)$ of $Z\times\C$.\\ 
\indent 
Then $s=\a(\1)$ is a nowhere zero section of $\mathcal{L}\otimes V$ which induces a section of $P(\mathcal{L}\otimes V)=Z\times PV$, 
obviously, given by $z\mapsto(z,\phi_Z(z))$\,, for some map $\phi_Z:Z\to PV$.\\       
\indent 
We claim that $\phi_Z$ is a local diffeomorphism. Indeed, the differential of $\phi_Z$ is determined by $\nabla s$ (see \cite{Pan-proj_ro}\,), 
where $\nabla$ is the tensor product of the canonical $\r_Z$-connection on $\mathcal{L}$ and the trivial connection on $Z\times V$, 
where $\r_Z:\mathscr{T}Z\to TZ$ is the projection. Thus, also, denoting by $\nabla$ the $\r_Z$-connection on $\mathscr{T}Z$ with respect to 
which $\a$ is covariantly constant, we have to show that $\nabla\1$ is an isomorphism, at each point.\\ 
\indent 
The existence of $\a$ shows that the frame bundle of $\mathscr{T}Z$ admits a reduction to $\C\setminus\{0\}$\,, embedded into ${\rm GL}(V)$ 
through $\l\mapsto\l\,{\rm Id}_V$\,. Furthermore, $\nabla$ is the flat $\r_Z$-connection corresponding to this reduction. Hence, 
$\nabla\1={\rm Id}_{\mathscr{T}Z}$ which implies that $\phi_Z$ is a local diffeomorphism. Moreover, as $\mathcal{L}$ restricted to each 
twistor sphere has Chern number $1$\,, we have that $\phi_Z$ maps each twistor sphere diffeomorphically onto a projective line of $PV$, 
and the proof quickly follows.    
\end{proof} 

\indent 
Let $M$ be a quaternionic manifold with twistor space $Z$ given by $\psi:Y\to Z$ and $\p:Y\to Z$. By passing to an open neighbourhood of each point, 
if necessary, let $L$ be a line over $Z$ whose restriction to some twistor sphere has Chern number $1$ and let $\nabla^H$ be the connection on 
the direct image $H$ by $\p$ of $\psi^*L$\,, determined by $L$ and a connection on $Y$ compatible with $\psi$\, 
(recall Remark \ref{rem:compatible_ro-connection}\,).\\ 
\indent 
Denote by $\nabla^F$ the connection on $F$ given by the fundamental monopole with respect to $L$ and the decomposition $\widetilde{F}=H^*\oplus F$ 
corresponding to $\nabla^H$, where recall that $F$ is the vector bundle over $M$ such that $TM=H\otimes F$. 

\begin{thm}[compare \cite{Sal-dg_qm}\,,\,\cite{AleMarPon-99}\,] \label{thm:q_torsion_flat} 
The connection $\nabla^H\otimes\nabla^F$ is torsion free. Conversely, any torsion free connection on $M$, compatible with the underlying 
almost quaternionic structure of $M$, is obtained this way. 
\end{thm}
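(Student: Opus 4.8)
The plan is to prove both directions by exploiting the twistorial description of the connection data. For the direct statement, I would start from the compatible $\r$-connection $c$ on $Y$ and the induced connection $\nabla^H$ on $H$ (recall Remark \ref{rem:compatible_ro-connection}), together with the fundamental monopole. The key point is that the torsion of $\nabla^H\otimes\nabla^F$ is a section of a bundle over $M$ whose pull-back (or twistorial counterpart) can be read off on $Y$. Because the torsion is a tensor, it suffices to verify its vanishing after restricting to the projections by $\p$ of the fibres of $\psi$; there the relevant connections are flat (this is precisely the anti-self-duality built into the fundamental monopole via the Ward transformation, and the compatibility of $c$ with $\psi$). I expect that, writing $TM=H\otimes F$ and expanding the torsion of the tensor product connection $\nabla^H\otimes\nabla^F$ in terms of $\nabla^H$, $\nabla^F$, and the morphism $\r:E\to TM$ with $E=H\otimes F$, all the ingredients combine to give a quantity whose restriction to the $\psi$-fibre directions vanishes, whence it vanishes identically.

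For the converse, I would argue that the space of torsion-free connections compatible with the almost quaternionic structure is an affine space, and show that the twistorial data (a compatible $\r$-connection on $Y$ together with the splitting $\widetilde{F}=H^*\oplus F$) parametrises exactly this affine space. Concretely, given any torsion-free compatible connection $\nabla$, one reconstructs the projective $\r$-connection $c$ on $Y=P(H^*)$ (its projectivisation), and the choice of $L$ fixes a lift to $\nabla^H$; the torsion-freeness should force the complementary component to be the one coming from the fundamental monopole, i.e. to determine $\nabla^F$. Thus the map from twistorial data to torsion-free compatible connections is a bijection of affine spaces, which gives the ``obtained this way'' statement. Here I would lean on Theorem \ref{thm:global_ro_M} and Corollary \ref{cor:fundamental_monopoles}, which supply $\widetilde{T}M=TP/{\rm GL}(2)$ and the splitting $\widetilde{F}=H^*\oplus F$ when $j_0=k=1$, as is the case for quaternionic $M$.

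The main obstacle, as I see it, is the bookkeeping needed to identify the torsion of $\nabla^H\otimes\nabla^F$ with the obstruction to integrability encoded in $\r$, and to check that the anti-self-duality of the fundamental monopole is exactly what kills the fibrewise part of the torsion. One must be careful that $\nabla^H$ depends on the choice of $L$ and of the compatible connection $c$, whereas the torsion-free condition is intrinsic; the affine-space comparison in the converse is what controls this dependence, so the two halves of the proof are really two views of the same affine correspondence. I would therefore expect the cleanest route to be: (1) express the torsion in the splitting $TM=H\otimes F$; (2) restrict to $\psi$-fibre directions and invoke flatness there; (3) conclude vanishing by the tensorial nature of torsion; and (4) run the affine-space dimension/parametrisation count for the converse, matching compatible $\r$-connections on $Y$ (an affine space over sections of $E^*$, by Remark \ref{rem:compatible_ro-connection}) with torsion-free compatible connections on $M$.
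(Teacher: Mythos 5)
Your converse half is essentially the paper's own argument: both the compatible connections on $Y$ and the torsion-free connections on $M$ compatible with the underlying almost quaternionic structure are affine spaces over the $1$-forms on $M$ (for quaternionic $M$ the morphism $\r:E\to TM$ is an isomorphism, so sections of $E^*$ are exactly $1$-forms), and the correspondence is affine, hence bijective. The genuine gap is in the direct half, at your steps (2)--(3). Write $TM=H\otimes F$ with $\rank H=2$; then
\begin{equation*}
\Lambda^2T^*M\cong\bigl(\odot^2H^*\otimes\Lambda^2F^*\bigr)\oplus\bigl(\Lambda^2H^*\otimes\odot^2F^*\bigr)\,,
\end{equation*}
and the projections by $\p$ of the fibres of $\psi$ are tangent to the planes $e\otimes F$, $e\in H\setminus0$\,. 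Since $\Lambda^2(e\otimes F)=(e\odot e)\otimes\Lambda^2F$, restricting a $TM$-valued $2$-form to all such planes detects only its component in $\odot^2H^*\otimes\Lambda^2F^*\otimes TM$; the component in $\Lambda^2H^*\otimes\odot^2F^*\otimes TM$ restricts to zero on every such plane, whatever it is. So ``torsion is a tensor, hence vanishing on the $\psi$-fibre directions implies identical vanishing'' is false: an entire component of the torsion is invisible to your test, and no purely tensorial argument can recover it. There is a second, smaller problem: the flatness you invoke (anti-self-duality of the fundamental monopole, via the Ward transformation) is a statement about \emph{curvature} along the $\alpha$-submanifolds, not about torsion, so even the fibrewise vanishing of the torsion would need a separate justification.

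The paper closes exactly this hole by a different mechanism. It passes to the frame bundle $P$ of $H$, which is hypercomplex, and whose Obata connection $\nabla^P$ --- the tensor product of the trivial connection on $P\times\C^{\!2}$ with the pull-back by $\p_P$ of the fundamental monopole --- is torsion free. The horizontal distribution of $\nabla^H$ gives, after passing to ${\rm GL}(2)$-quotients, the inclusion $TM=H\otimes F\subseteq H\otimes\widetilde{F}=\widetilde{T}M$ of Remark \ref{rem:about_P}(2), and, because $\p_P$ is twistorial (Remark \ref{rem:ro-hyper_from_ro-quatern}), the torsion of $\nabla^H\otimes\nabla^F$ is identified with the horizontal part of the torsion of $\nabla^P$ evaluated on horizontal vector fields --- hence it vanishes. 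It is this identification with a connection already known to be torsion free upstairs, not anti-self-duality on the $\alpha$-planes, that kills \emph{both} components of the torsion. If you insist on an argument carried out on $M$ itself, you must treat the $\Lambda^2H^*\otimes\odot^2F^*$ part by some additional means (for instance the first Bianchi identity, or a representation-theoretic analysis of the torsions of compatible connections); your plan, as written, leaves that part untouched.
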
 
\begin{proof} 
Let $P$ be the frame bundle of $H$ and let $\p_P:P\to M$ be its projection. 
Then $P$ is a hypercomplex manifold and its Obata connection $\nabla^P$ is the tensor product of the trivial connection 
on $P\times\C^{\!2}$ and the pull back by $\p_P$ of the fundamental monopole of $M$ with respect to $L$\,.\\ 
\indent 
After passing to ${\rm GL}(2)$-quotients, the inclusion into $TP$ of the `horizontal' distribution on $P$, corresponding to $\nabla^H$, 
gives the inclusion $TM=H\otimes F\subseteq H\otimes\widetilde{F}=\widetilde{T}M$ (see Remark \ref{rem:about_P}(2)\,).\\ 
\indent 
From the fact that $\p_P$ is twistorial and from Remark \ref{rem:ro-hyper_from_ro-quatern}\,, we deduce that the torsion of 
$\nabla^H\otimes\nabla^F$ is given by the horizontal part of the torsion of $\nabla^P$ evaluated on horizontal vector fields. 
But $\nabla^P$ is torsion free and, hence, also, $\nabla^H\otimes\nabla^F$ is torsion free.\\ 
\indent 
Now, the converse statement is an immediate consequence of the fact that both the compatible connections on $Y$ and 
the torsion free connections on $M$, compatible with the underlying almost quaternionic structure of $M$, 
are affine spaces over the space of $1$-forms on $M$.  
\end{proof} 

\subsection{Hypercomplex and hyper-K\"ahler manifolds} In Theorem \ref{thm:quatern_flat} and with the same notations as in \eqref{e:for_tilde}\,, 
if $M$ is a hypercomplex manifold, assertion (i) is equivalent 
to the fact that the connection on $F$, given by the Ward transformation applied to $\chi^*\bigl(\ol(-1)\bigr)\otimes({\rm ker}\dif\!\chi)$\,, is flat. 
Indeed, from the exact sequence appearing in the proof of Corollary \ref{cor:BirGro}\,, it follows that \eqref{e:for_tilde} splits, the fundamental 
monopole of $M$ (with respect to $\chi^*\bigl(\ol(1)\bigr)$\,) induces a connection on $F$, and $H^*=\widetilde{F}/F$ is endowed with a flat connection. 
(Consequently, \emph{the connection on $H$ induced by the fundamental monopole of $M$ and the projection $\widetilde{F}=H^*\oplus F\to H^*$ is flat}.)
Therefore the connection of $\widetilde{F}$ is flat if and only if the connection of $F$ is flat; equivalently, 
the Obata connection of $M$ is flat. The flat model is $\C^{\!2n}$ identified with the space of projective lines in $\C\!P^{n+1}$ which are 
disjoint from a fixed projective subspace of codimension $2$\,.\\ 
\indent 
A manifold is hyper-K\"ahler if and only if it is hypercomplex and endowed with a Riemannian metric preserved by the Obata connection. 
If $M$ is hypercomplex then any hyper-K\"ahler metric on it corresponds, under the Ward transformation, with a nondegenerate section 
of $\Lambda^2\bigl(\chi^*\bigl(\ol(-1)\bigr)\otimes({\rm ker}\dif\!\chi)\bigr)^*$; in particular, the dimension of $M$ is divisible by $4$\,. 
From the fact that the Obata connection is torsion free, we deduce that any such section restricts to give symplectic structures on the fibres of $\chi$\,, 
a well known fact \cite{HiKaLiRo}\,. 

\subsection{Quaternionic-K\"ahler manifolds} \label{subsection:qK} 
With notations as in Theorem \ref{thm:quatern_flat} (but without any flatness assumption) 
a contact structure on $Z$ can be defined as a nonzero section $\theta$ of $\mathscr{T}^*Z\otimes\mathcal{L}$ such that $\theta(\1)=0$ 
and $\dif^{\nabla}\!\theta$ is nondegenerate, where $\nabla$ is the canonical $\r_Z$-connection on $\mathcal{L}$\,, with $\r_Z:\mathscr{T}Z\to TZ$ 
the projection. (This, obviously, does not require $Z$ to be a twistor space.)\\ 
\indent 
We shall assume, by passing to an open subset of $M$, if necessary, that $\theta$ restricted to any twistor sphere $t\subseteq Z$ 
induces an isomorphism between $Tt$ and $\mathcal{L}|_t$\,. Equivalently, we assume the twistor spheres 
transversal to the contact distribution $\H=\r_Z({\rm ker}\,\theta)$\,.\\ 
\indent  
Similarly as before, by passing to an open neighbourhood of each point of $M$ we may find a line bundle $L$ on $Z$ such that $L^2=\mathcal{L}$\,; 
in particular, $L$ restricted to any twistor sphere has Chern number $1$\,. 
Then $\dif^{\nabla}\!\theta$ defines a linear symplectic structure on $L^*\otimes\mathscr{T}Z$, and 
therefore the fundamental monopole preserves a linear symplectic structure on $\widetilde{F}$. 
Moreover, as $\theta$ induces a contact structure on each twistor sphere 
(the corresponding contact distributions are just the trivial zero distributions), also, 
$H^*$ is endowed with a linear symplectic structure and the embedding $H^*\subseteq\widetilde{F}$ preserves the linear symplectic structures. 
Therefore \eqref{e:for_tilde} splits, as we may identify $F$ with the symplectic orthogonal complement of $H^*$ in $\widetilde{F}$.\\ 
\indent 
Now, just recall that $TM=H\otimes F$ and, similarly to the hyper-K\"ahler case, we obtain the Riemannian metric $g_{\theta}$ on $M$. 
To describe the Levi-Civita connection of $g_{\theta}$ we, again, proceed similarly to the hyper-K\"ahler case:  
endow $F$ with the connection $\nabla^F$ induced from $L^*\otimes\H$ by the Ward transformation, and recall that the splitting of \eqref{e:for_tilde} 
corresponds to a connection $\nabla^H$ on $H$ which, obviously, preserves the linear symplectic structure of $H^*$. 

\begin{thm}[compare \cite{War-graviton}\,,\,\cite{LeB-qK_twist}\,] \label{thm:contact_qK} 
The connection $\nabla^H\otimes\nabla^F$ is the Levi-Civita connection of $g_{\theta}$\,, and, thus, $(M,g_{\theta})$ is quaternionic-K\"ahler. 
\end{thm}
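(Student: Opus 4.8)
The plan is to invoke the fundamental theorem of (holomorphic) pseudo-Riemannian geometry: a nondegenerate metric admits a unique torsion free connection preserving it, so it suffices to verify that $\nabla:=\nabla^H\otimes\nabla^F$ is torsion free and satisfies $\nabla g_{\theta}=0$. Granting these, $\nabla$ is the Levi-Civita connection of $g_{\theta}$; and since $\nabla$ preserves the decomposition $TM=H\otimes F$ and acts through a genuine connection on the factor $H$ carrying the fibrewise family of complex structures, it preserves the almost quaternionic structure, whence $(M,g_{\theta})$ is quaternionic-K\"ahler.

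First I would record, from the construction preceding the statement, that $g_{\theta}=\omega_H\otimes\omega_F$, where $\omega_F$ is the symplectic form on $F$ obtained by restricting the symplectic form $\Omega$ of $\widetilde{F}$ to the symplectic-orthogonal complement $F$ of $H^*$, and $\omega_H$ is the symplectic form on $H$ dual to the one on $H^*$. As a tensor product of two skew forms, $g_{\theta}$ is symmetric and nondegenerate, hence a metric.

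For torsion-freeness I would note that the symplectic-orthogonal splitting of \eqref{e:for_tilde} is one of the splittings occurring in Theorem \ref{thm:q_torsion_flat}: it determines precisely the connection $\nabla^H$ appearing in the statement, and the connection $\nabla^F$ induced from $L^*\otimes\H$ by the Ward transformation is the one the fundamental monopole induces on $F$ through this splitting. Thus $\nabla$ is among the connections proved torsion free in Theorem \ref{thm:q_torsion_flat}. For metric-compatibility I would argue factor by factor. By the setup, $\nabla^H$ preserves the symplectic structure of $H^*$, hence its dual $\omega_H$. Writing $\nabla^{\widetilde F}$ for the fundamental monopole, a connection on $\widetilde{F}$ with $\nabla^{\widetilde F}\Omega=0$, the splitting gives $\nabla^F_Xs=\mathrm{pr}_F\bigl(\nabla^{\widetilde F}_Xs\bigr)$ for $s$ a section of $F$; expanding $X\bigl(\Omega(s,s')\bigr)=\Omega(\nabla^{\widetilde F}_Xs,s')+\Omega(s,\nabla^{\widetilde F}_Xs')$ and using $\Omega(H^*,F)=0$ to discard the $H^*$-components of $\nabla^{\widetilde F}_Xs,\nabla^{\widetilde F}_Xs'$ yields $\nabla^F\omega_F=0$. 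Since $\nabla^H\otimes\nabla^F$ then preserves $\omega_H\otimes\omega_F=g_{\theta}$, the connection is metric and the conclusion follows.

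The main obstacle is exactly this metric-compatibility of $\nabla^F$: one must check that projecting the monopole onto the symplectic complement $F$ does not destroy the preserved form, which is where the orthogonality $\Omega(H^*,F)=0$ of the chosen splitting is essential; a secondary point to be careful about is that the contact-distribution description of $\nabla^F$ coincides with its fundamental-monopole description, so that Theorem \ref{thm:q_torsion_flat} applies without change.
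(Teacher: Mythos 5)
Your skeleton is the paper's own: establish that $\nabla^H\otimes\nabla^F$ is torsion free via Theorem \ref{thm:q_torsion_flat} and that it preserves $g_{\theta}$ factor by factor, then invoke uniqueness of the Levi-Civita connection. But the step you compress into ``the symplectic-orthogonal splitting of \eqref{e:for_tilde} is one of the splittings occurring in Theorem \ref{thm:q_torsion_flat}'' is a genuine gap, and it is where essentially all the work of the paper's proof lies. Theorem \ref{thm:q_torsion_flat} does \emph{not} apply to an arbitrary splitting of \eqref{e:for_tilde}\,: its hypothesis is that $\nabla^H$ is the connection determined by $L$ together with a connection on $Y$ \emph{compatible} with $\psi$. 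Splittings of \eqref{e:for_tilde} correspond to arbitrary connections on $H$, an affine space over the $1$-forms with values in ${\rm End}H$, whereas by Remark \ref{rem:compatible_ro-connection} the compatible connections on $Y$ form an affine space over the $1$-forms on $M$ only; so most splittings are not of the required form, and the one produced by the contact structure must be \emph{shown} to be. The paper's proof does exactly this: it shows that the connection induced by $\nabla^H$ on $Y=P(H^*)$ is $(\dif\!\psi)^{-1}(\H)$ --- a compatible connection precisely because the twistor spheres are assumed transversal to $\H$ --- and then, using the lift of $\check{\theta}$ to $Y$ as the projection $TY\to{\rm ker}\dif\!\p$ (with kernel $(\dif\!\psi)^{-1}(\H)$\,) followed by an isomorphism onto $\psi^*\bigl(L^2\bigr)$, that $\nabla^H$ is the connection determined by $L$ and this induced connection, not merely some connection projecting to it.

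A second identification is also asserted rather than proved in your write-up, and you use it twice: that the connection induced from $L^*\otimes\H$ by the Ward transformation coincides with the connection the fundamental monopole induces on $F$ through the splitting. You call this ``secondary'', but both your appeal to Theorem \ref{thm:q_torsion_flat} and your metric-compatibility computation (which is carried out for the monopole on $\widetilde{F}$, not for the Ward transform of $L^*\otimes\H$) rest on it. In the paper this comes out of the ``second description'' of the symplectic structure of $F$: since $\theta(\1)=0$, at each point $\r_Z^{-1}(\H)$ is the symplectic orthogonal complement of $\1$ in $\mathscr{T}Z$ with respect to $\dif^{\nabla}\!\theta$, so under the Ward transformation the inclusion $L^*\otimes\r_Z^{-1}(\H)\subseteq L^*\otimes\mathscr{T}Z$ corresponds to the inclusion of $F$ (the symplectic complement of $H^*$) in $\widetilde{F}$\,. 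This observation gives simultaneously the identification of the two descriptions of $\nabla^F$ \emph{and} --- since the descended symplectic structure on $L^*\otimes\H$ is holomorphic, hence carried to a covariantly constant one by the Ward transformation --- the fact that $\nabla^F$ preserves the symplectic structure of $F$, which makes your projection argument (correct as linear algebra) unnecessary. In short: your outline and your algebra are fine, but the two twistor-level identifications you take for granted are the actual content of the theorem.
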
 
\begin{proof} 
Note that, we may also describe the linear symplectic 
structure of $F$ as follows. Firstly, at each point, $\r_Z^{-1}(\H)$ is equal to the symplectic orthogonal complement of $\1$ in $\mathscr{T}Z$. 
Hence, the restriction of $\dif^{\nabla}\!\theta$ to $\r_Z^{-1}(\H)$ descends to a linear symplectic structure on $L^*\otimes\H$ which quite clearly 
corresponds to the linear symplectic structure of $F$. In particular, $\nabla^F$ preserves the linear symplectic structure of $F$.\\ 
\indent 
The second description of the linear symplectic structure of $F$ also shows that the connection induced by $\nabla^H$ on $Y\bigl(=P(H^*)\bigr)$ 
is (given by) $(\dif\!\psi)^{-1}(\H)$\,, where recall that $\psi:Y\to Z$ is the submersion giving the twistor space $Z$. 
Furthermore, on denoting by $\check{\theta}$ 
the $L^2$-valued $1$-form on $Z$ such that $\theta=\check{\theta}\circ\r_Z$\,, we have that $\check{\theta}$ lifts 
to $Y$ as the composition of the projection $TY\to{\rm ker}\dif\!\p$\,, with kernel $(\dif\!\psi)^{-1}(\H)$\,, followed by an isomorphism 
from ${\rm ker}\dif\!\p$ onto $\psi^*\bigl(L^2\bigr)$\,. It follows quickly that $\nabla^H$ is the connection determined by $L$ and the connection it induces 
on $Y$.\\   
\indent 
Thus, $\nabla^H\otimes\nabla^F$ preserves $g_{\theta}$ and, together with Theorem \ref{thm:q_torsion_flat}\,, this completes the proof. 
\end{proof}    
 
\indent 
The infinitesimal automorphisms of a contact structure on $Z$, given by $\theta:\mathscr{T}Z\to\mathcal{L}$\,, 
can be characterised as sections $u$ of $\mathscr{T}Z$ such that $\Lie^{\nabla}_{\,u}\theta=0$\,, where $\nabla$ is the canonical $\r_Z$-connection 
of $\mathcal{L}$\,; equivalently (as it is more familiar), the local flow of $\r_Z(u)$ preserves 
the contact distribution $\H\bigl(=\r({\rm ker}\,\theta)\bigr)$\,. 
Note that, if $u$ is an infinitesimal automorphism of $\theta$ then 
\begin{equation} \label{e:kernel_of_dif} 
(\dif^{\nabla}\!\theta)(u,v)=-\nabla_v\bigl(\theta(u)\bigr)\;, 
\end{equation} 
for any $v\in\mathscr{T}Z$.\\  
\indent   
Let $V$ be a (finite dimensional) vector space of infinitesimal automorphisms of $\theta$. Let  
$s_{\theta,V}$ be the section of $\mathcal{L}\otimes V^*$ given by $s_{\theta,V}(z,u)=\theta(u_z)$\,, for any $z\in Z$ and $u\in V$. 
Then the zero set of $s_{\theta,V}$ is \emph{the base locus of $\mathcal{L}$\,, with respect to $\theta(V)$}\,, and in its complement 
the differential of the map $\phi_{\theta,V}:Z\to P(V^*)$ induced by $s_{\theta,V}$ is given by 
$\nabla s_{\theta,V}:\mathscr{T}Z\to\mathcal{L}\otimes V^*$ (here, $\nabla$, also, denotes the tensor product 
of the canonical $\r_Z$-connection on $\mathcal{L}$ and the trivial connection on $Z\times V^*$). 
Thus, from \eqref{e:kernel_of_dif} we deduce that, at each $z\in Z$, the kernel of $\dif\!\phi_{\theta,V}$ is the image by $\r_Z$ 
of the symplectic orthogonal complement of $\{u_z\,|\,u\in V\}$ (essentially, a well known fact).\\ 
\indent 
Suppose, now, that $Z$ is the twistor space of the quaternionic-K\"ahler manifold $(M,g_{\theta})$\,. We may suppose that each $u\in V$ 
is such that the local flow of $\r_Z(u)$ maps twistor spheres to twistor spheres and thus it corresponds to a Killing vector 
field $X^u$ on $(M,g_{\theta})$\,. 
In fact, by applying Theorem \ref{thm:global_ro_M} we may obtain $X^u$ as the image through $\widetilde{\r}$ of the direct image by $\p$ 
of $\psi^*u$\,.\\ 
\indent 
Similarly, $s_{\theta,V}$ corresponds to a section $S_{\theta,V}$ of $({\rm End}_0H)\otimes V^*$ obtained as follows, 
where ${\rm End}_0$ denotes the space of trace free endomorphisms. 
Firstly, recall that $P$ admits a reduction $P_0$ to ${\rm SL}(2)$ corresponding to the symplectic structure of $H$ induced 
by $\theta$ (and $L$). Also, we have a decomposition 
$TP_0/{\rm SL}(2)=({\rm End}_0H)\oplus TM$ corresponding to $\nabla^H$ of Theorem \ref{thm:contact_qK}\,. Further, for any $u\in V$, 
the direct image by $\p$ of $\psi^*u$ restricts to give a section $\widetilde{X}^u$ of $TP_0/{\rm SL}(2)$\,. Then $S_{\theta,V}(x,u)$ 
is the projection to ${\rm End}_0(H_x)$ of $\widetilde{X}^u_{\,x}$\,.\\ 
\indent 
Now, the \emph{base locus} of $S_{\theta,V}$ is formed of the points $x\in M$ where $S_{\theta,V}(x)$ seen as an element of ${\rm Hom}({\rm End}_0(H_x),V^*)$ 
has rank less than $3$\,. Thus, outside its base locus, $S_{\theta,V}$ determines a map $\Phi_{\theta,V}:M\to{\rm Gr}_3(V^*)$ 
associating to any $x\in M$ the image of $S_{\theta,V}(x)$ seen as linear map from ${\rm End}_0(H_x)$ to $V^*$. 
 Furthermore, similarly to the line bundles case, the differential of $\Phi_{\theta,V}$ is determined by 
 $\nabla S_{\theta,V}:TP_0/{\rm SL}(2)\to({\rm End}_0H)\otimes V^*$, 
where $\nabla$ is the $\widetilde{\r}$-connection on ${\rm End}_0H$, induced by the canonical $\widetilde{\r}$-connection of $H$,  
tensorised with the trivial connection on $M\times V^*$. (Note that, these ideas, also, provide an alternative proof for Theorem \ref{thm:quatern_flat}\,, 
without involving the twistor space of $M$.)

\begin{thm}[compare \cite{Sal-Inventiones1982}\,,\,\cite{LeB-qK_twist}\,,\,\cite{AleMar-Annali96}\,] \label{thm:degen_qK} 
Let $M$ be a quaternionic manifold with twistor space $Z$, given by $\psi:Y\to Z$. If the fibres of $\psi$ are connected 
then there exists a natural correspondence between the following:\\ 
\indent 
\quad{\rm (i)} Pairs $(g,\nabla)$ formed of a section of $\odot^2T^*M$ and a torsion free connection on $M$ 
both compatible with its quaternionic structure and such that $\nabla g=0$\,.\\ 
\indent 
\quad{\rm (ii)} Distributions $\H$ on $Z$ of corank $1$ and transversal to the twistor spheres.\\ 
\indent 
Moreover, for any such $(g,\nabla)$ and $\H$, with nontrivial $g$ (equivalently, nonintegrable $\H$), 
the kernel of $g$ gives a foliation on $M$ locally defined by 
twistorial submersions $\phi$ onto quaternionic-K\"ahler manifolds $M_{\phi}$ such that:\\ 
\indent 
\quad{\rm (1)} $\phi$ preserves, by pull back and an obvious Lie groups morphism, the metrics and the corresponding connections;\\ 
\indent 
\quad{\rm (2)} the differential of the (local) submersion between the twistor spaces, corresponding to $\phi$\,, maps $\H$ 
onto the contact distribution of the twistor space of $M_{\phi}$\,.     
\end{thm}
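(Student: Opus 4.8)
The plan is to reduce the statement to the nondegenerate case already settled by Theorem \ref{thm:contact_qK}, by first establishing a dictionary between corank-one distributions on $Z$ and the ``contact-type'' forms $\theta$ of Section \ref{subsection:qK}, and only afterwards handling the degeneracy through a foliation argument. So I would begin by translating (ii) into the language of Theorem \ref{thm:contact_qK}. Given a corank-one distribution $\H$ on $Z$ transversal to the twistor spheres, set $\mathcal{L}=TZ/\H$ and observe that the preimage $\r_Z^{-1}(\H)\subseteq\mathscr{T}Z$ is a corank-one subbundle containing $\1$ (since $\r_Z(\1)=0$), so that its annihilator is generated by a section $\theta$ of $\mathscr{T}^*Z\otimes\mathcal{L}$. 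Because the twistor spheres are Riemann spheres and $\H$ is transversal to them, $Tt\cap\H=0$, hence $\mathcal{L}|_t\cong Tt\cong\ol(2)$; thus $\mathcal{L}|_t$ is nontrivial, $\theta(\1)=0$, and $\theta|_t$ induces an isomorphism $Tt\to\mathcal{L}|_t$. After passing to an open neighbourhood and choosing $L$ with $L^2=\mathcal{L}$, this is precisely the data feeding Theorem \ref{thm:contact_qK}, the only difference being that I do \emph{not} impose nondegeneracy of $\dif^{\nabla}\!\theta$.

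Next I would run the construction of Section \ref{subsection:qK} essentially verbatim. The nondegeneracy of $\dif^{\nabla}\!\theta$ was used there only to conclude that $g_{\theta}$ is a genuine (nondegenerate) metric: the symplectic form on $H^*$ coming from the induced contact structure on each twistor sphere is \emph{always} nondegenerate, so $H^*$ is a symplectic subbundle of $\widetilde{F}$ and \eqref{e:for_tilde} still splits with $F$ the symplectic-orthogonal complement of $H^*$, the radical of $\dif^{\nabla}\!\theta$ being absorbed into $F$. Setting $\nabla^F$ to be the Ward transform of $L^*\otimes\H$ and $\nabla^H$ the connection on $H$ given by the splitting, Theorem \ref{thm:q_torsion_flat} shows that $\nabla=\nabla^H\otimes\nabla^F$ is torsion free and quaternionic, while the computation of Theorem \ref{thm:contact_qK} shows $\nabla g=0$, the resulting $g=g_{\theta}$ being a possibly degenerate section of $\odot^2T^*M$. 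Conversely, a pair $(g,\nabla)$ as in (i) determines, through Theorem \ref{thm:q_torsion_flat}, the connection $\nabla^H$ on $H$, hence a connection on $Y=P(H^*)$; the compatibility conditions, and in particular $\nabla g=0$, are exactly what make its horizontal distribution project under $\psi$ to a corank-one $\H$ on $Z$, via the identity $\theta=\check{\theta}\circ\r_Z$ of Theorem \ref{thm:contact_qK}, and force $g=g_{\theta}$. Thus the two assignments are mutually inverse, proving the equivalence of (i) and (ii).

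For the ``moreover'' part I would use that $g=\ep_H\otimes\omega_F$ for the symplectic form $\ep_H$ on $H$ and the induced $\nabla^F$-parallel $2$-form $\omega_F$ on $F$, whence $\ker g=H\otimes F_0$ with $F_0$ the radical of $\omega_F$. When $g$ is nontrivial and of constant rank, $F_0$ is $\nabla^F$-parallel, so $\ker g$ is $\nabla$-parallel and, $\nabla$ being torsion free, integrable: this is the foliation of the statement. On $Z$ the radical $F_0$ corresponds, through the Ward transformation, to the radical $\mathscr{K}$ of $\dif^{\nabla}\!\theta|_{\H}$, the characteristic distribution of the (non-contact) $\H$. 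I would then check that $\mathscr{K}$ is integrable of constant rank and that its local leaf space $Z_{\phi}$ inherits a corank-one distribution $\H_{\phi}$ which is now a genuine contact distribution, with the twistor spheres descending to embedded Riemann spheres of the required normal bundle type. Granting this, $Z_{\phi}$ is the twistor space of the local quotient $M_{\phi}$ of $M$ by $\ker g$, the induced submersion $\phi$ is twistorial because it is covered by the local submersion $Z\to Z_{\phi}$, and Theorem \ref{thm:contact_qK} applied to $(Z_{\phi},\H_{\phi})$ yields that $(M_{\phi},g_{\theta_{\phi}})$ is quaternionic-K\"ahler; properties (1) and (2) follow from the naturality of the whole construction, $\dif\!\phi$ carrying $g$ to $\phi^*g_{\theta_{\phi}}$ and the connection data corresponding under the obvious Lie groups morphism, while the covering submersion sends $\H$ to $\H_{\phi}$ by construction.

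The main obstacle is precisely this last step. Verifying that the characteristic distribution $\mathscr{K}$ of $\H$ (equivalently, the radical $F_0$ of $g$) has constant rank and is integrable on $Z$, and that the resulting leaf space $Z_{\phi}$ is honestly a twistor space---that is, that the twistor spheres project to a locally complete family of Riemann spheres on which $\H_{\phi}$ restricts to a nondegenerate contact form---is where the genuine work lies. Everything else is a degenerate specialisation of Theorems \ref{thm:q_torsion_flat} and \ref{thm:contact_qK}; it is the transition from the possibly degenerate $\dif^{\nabla}\!\theta$ on $Z$ to the nondegenerate one on $Z_{\phi}$, carried out compatibly with the twistor fibration, that requires the most care.
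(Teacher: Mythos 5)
Your reduction of (ii) to a degenerate version of Theorem \ref{thm:contact_qK} is sound and is exactly the route the paper takes (its proof states that the passage from (ii) to (i), together with the second statement, ``follows quickly from the proof of Theorem \ref{thm:contact_qK}''); in particular, your linear-algebra observation that the splitting $\widetilde{F}=H^*\oplus F$ only requires nondegeneracy of the form on $H^*$, the radical of $\dif^{\nabla}\!\theta$ being absorbed into $F$, is the right way to drop the contact hypothesis. The genuine gap is in your passage from (i) to (ii). You assert that compatibility and $\nabla g=0$ ``are exactly what make'' the horizontal distribution on $Y$ induced by $\nabla^H$ project under $\psi$ to a corank-one distribution on $Z$, ``via the identity $\theta=\check{\theta}\circ\r_Z$ of Theorem \ref{thm:contact_qK}''. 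But that identity was derived there under the assumption that the distribution $\H$ (equivalently, $\theta$) on $Z$ is already given; in the direction (i)$\Rightarrow$(ii) no such object exists yet, so invoking it is circular. What actually has to be proved is projectability: the compatible connection on $Y$ attached to $\nabla$ by Theorem \ref{thm:q_torsion_flat} has horizontal distribution $D\supseteq{\rm ker}\dif\!\psi$ of corank one, and one must show $[\Gamma({\rm ker}\dif\!\psi),\Gamma(D)]\subseteq\Gamma(D)$\,; the obstruction is a curvature term of the induced connection evaluated along the $\psi$-fibre directions, and killing it --- using torsion-freeness, the compatibility of $g$\,, $\nabla g=0$ and the \emph{first Bianchi identity} --- is the actual content of this implication, as the paper's proof indicates. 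It is also precisely here that the hypothesis that the fibres of $\psi$ be connected is needed (to pass from this infinitesimal invariance to a well-defined distribution $\H=\dif\!\psi(D)$ on $Z$); your argument never uses that hypothesis, even though the paper remarks that it is needed only for (i)$\Rightarrow$(ii).

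Concerning the ``moreover'' part, which you candidly flag as your main unverified obstacle: some of it is cheaper than you fear. Since $\nabla g=0$\,, the rank of $g$ is automatically constant and $\ker g$ is $\nabla$-parallel, hence integrable (no separate constant-rank assumption is needed); likewise, the Cauchy characteristic distribution $\mathscr{K}$ of a corank-one distribution is always involutive, so its integrability is not an issue. What does require an argument is the identification of the local leaf space of $\mathscr{K}$ with the twistor space of $M_{\phi}$\,, carrying an honest contact structure restricting correctly to the descended twistor spheres; the paper compresses this into the same reference to the proof of Theorem \ref{thm:contact_qK}\,. As written, then, your proposal is incomplete on both counts, the more serious one being the missing Bianchi-identity step in (i)$\Rightarrow$(ii).
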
 
\begin{proof}
A pair as in (i) gives a distribution as in (ii) after a straightforward argument by involving the first Bianchi identity.\\ 
\indent 
How to pass from (ii) to (i)\,, and the second statement follows quickly from the proof of Theorem \ref{thm:contact_qK}\,. 
\end{proof} 
 
\indent 
In Theorem \ref{thm:degen_qK}\,, the connectedness assumption on the fibres of $\psi$ is necessary only when passing from (i) to (ii)\,. 
Another such sufficient assumption is $M$ be the complexification of a `real' quaternionic manifold.\\ 
\indent 
Examples of twistorial submersions as in Theorem \ref{thm:degen_qK} can be found in \cite{IMOP}\,. Also, in \cite[\S5]{fq_2} a construction 
of distributions as in Theorem \ref{thm:degen_qK} can be found.

\section{A construction of quaternionic manifolds} \label{section:4} 

\indent 
In this section we consider $\r$-quaternionic manifolds (of constant type and) with equivariant \cite{Pan-hmhKPW} normal sequences 
(of the twistor spheres). This means that the exact sequence 
\begin{equation} \label{e:consistent_normal_sequence} 
0\longrightarrow{\rm ker}\dif\!\p\longrightarrow\psi^*(TZ)\longrightarrow\mathcal{T}M\longrightarrow0 
\end{equation}  
(obtained from $0\longrightarrow{\rm ker}\dif\!\p\longrightarrow TY\longrightarrow\p^*(TM)\longrightarrow0$ 
by taking quotients over ${\rm ker}\dif\!\psi$\,) is invariant under the automorphism bundle ${\rm Aut}\,Y$\,; in particular, ${\rm Aut}\,Y$ 
is embedded into the extended automorphism bundle $\mathcal{P}$ of \eqref{e:consistent_normal_sequence} as a section of the 
obvious bundle morphism from $\mathcal{P}$ onto ${\rm Aut}\,Y$.\\ 
\indent 
This implies that a distinguished $\r$-connection exists, on the frame bundle of $Y$, giving $\psi$ (rather the 
foliation it determines). Moreover, this $\r$-connection is ${\rm CO}(3)$-invariant 
and, accordingly, the `typical vector bundle' (giving the linear $\r$-quaternionic structure at each point of $M$\,) is ${\rm CO}(3)$-invariant.\\   
\indent 
That is, we shall be led to consider $\r$-quaternionic manifolds $M$ of constant type and structural group $G$ which is a Lie subgroup 
of the automorphism group of the typical vector bundle.  
Also, the $\r$-connection on $Y$ giving $\psi$ is $G$-invariant. This means that the restriction 
to each fibre of $\p$ of the diagram  
\begin{equation} \label{e:for_ro_connection}   
\begin{gathered} 
\xymatrix{
       &                         \hspace{1mm}                     \mathcal{E}     \ar[dl] \ar[d]                       \\
                      \psi^*(TZ)   \ar[r]                            &     \mathcal{T}M                                                         
   } 
\end{gathered} 
\end{equation} 
is $G$-invariant. Note that, for (classical) connections this is automatically satisfied (for the $\r$-quaternionic manifolds 
of constant type, with structural group $G$) as then $\psi^*(TZ)=({\rm ker}\dif\!\psi)\oplus\mathcal{T}M$.\\ 
\indent 
To describe the relevant `typical diagram' 
we start with the Riemann sphere and choose a square root of its tangent bundle which we denote by $\ol(1)$\,, 
and let $U_n$ be the space of sections of $\ol(n)$\,, $n\in\mathbb{N}\setminus\{0\}$\,. 
Thus, we may identify the Riemann sphere with $P(U_1^*)$ and $\ol(-1)$ with the tautological line bundle over it. 
Furthermore, we have $\bigl(T(\ol(2)\setminus0)\bigr)/(\C\!\setminus\{0\})=\ol(1)\otimes U_1^*$, and,  
consequently, we have identified $\mathfrak{sl}(U_1)\,(=U_2)$ and the Lie algebra of vector fields on the Riemann sphere.  
Moreover, as $\bigl(T(\ol(n)\setminus0)\bigr)/(\C\!\setminus\{0\})=\bigl(T(\ol(2)\setminus0)\bigr)/(\C\!\setminus\{0\})$\,, 
we, also, retrieve the fact that $U_n$ is the irreducible representation space of dimension $n+1$ of $\mathfrak{sl}(U_1)$\,, 
$n\in\mathbb{N}\setminus\{0\}$\,. Obviously, this is the simplest case of the Borel--Weil theorem which we recalled, also, to emphasize 
the following exact sequence of $\mathfrak{sl}(U_1)$-invariant vector bundles 
$$0\longrightarrow\ol(-1)\otimes U_{n-1}\to\ol\otimes U_n\to\ol(n)\longrightarrow0\;,$$   
for any $n\in\mathbb{N}\setminus\{0\}$ (and where $U_0=\C$). This implies that the restriction of $TPU_n$ to the Veronese curve 
$PU_1\,(=P(U_1^*))\subseteq PU_n$ given by $\ol(n)$ is $U_{n-1}\otimes\ol(n+1)$\,. It follows that the (equivariant) normal exact sequence of 
$PU_1\subseteq PU_n$ is \cite{Pan-inf_auto_princ}\,: 
\begin{equation} \label{e:Veronese_normal_sequence} 
0\longrightarrow\ol(2)\longrightarrow U_{n-1}\otimes\ol(n+1) \longrightarrow U_{n-2}\otimes\ol(n+2)\longrightarrow0\;. 
\end{equation} 

\begin{defn} \label{defn:Veronese_space}  
A \emph{Veronese space} is a $\r$-quaternionic manifold with equivariant normal sequences given by \eqref{e:Veronese_normal_sequence}\,. 
\end{defn} 
 
\indent 
We are, thus, interested in the twistor theory associated to the $\mathfrak{sl}(U_1)$-invariant vector bundle $U_{n-2}\otimes\ol(n+2)$\,, 
where $n\geq2$\,. Pointwisely, this is given by the representation space $U_{n-2}\otimes U_{n+2}$ and the 
linear $\r$-quaternionic structure given by the $\mathfrak{sl}(U_1)$-invariant projection 
$\r_n:U_1\otimes U_{n-2}\otimes U_{n+1}\to U_{n-2}\otimes U_{n+2}$\,,  
corresponding to the $\mathfrak{sl}(U_1)$-invariant morphism of vector bundles from $U_{n-2}\otimes U_{n+1}\otimes\ol(1)$ 
onto $U_{n-2}\otimes\ol(n+2)$\,. The automorphism group of this linear twistorial structure is ${\rm CO}(3)=(\C\setminus\{0\})\times{\rm PGL}(U_1)$\,.\\ 
\indent 
Now, we can give the following typical diagram: 
\begin{equation} \label{e:typical_diagram_for_Veronese}   
\begin{gathered} 
\xymatrix{
       &                         \hspace{1mm}                    U_{n-2}\otimes U_{n+1}\otimes\ol(1)    \ar[dl] \ar[d]                       \\
                      U_{n-1}\otimes\ol(n+1)   \ar[r]                            &    U_{n-2}\otimes\ol(n+2)                                                       
   } 
\end{gathered} 
\end{equation} 
where the down-left arrow is given by $U_{n-1}\otimes U_n\otimes\ol(1)\to U_{n-1}\otimes\ol(n+1)$ 
(induced by a particlar case of \eqref{e:E}\,) and the isomorphism $U_{n-1}\otimes U_n=(U_{n-2}\otimes U_{n+1})\oplus U_1$\,. 

\begin{thm} \label{thm:Veronese_space} 
The projective frame bundle of the twistor space of a Veronese space is the twistor space of a quaternionic manifold. 
\end{thm}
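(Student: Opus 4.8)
The plan is to give a twistorial description of $Z'$, the projective frame bundle of $Z$ (that is, the bundle of frames of $TZ$ modulo scalars), by exhibiting a single rational curve $C_0\subseteq Z'$ with $N_{C_0/Z'}\cong\ol(1)^{\oplus k}$, where $k=\dim Z'-1$, and then invoking Kodaira's theorem \cite{Kod}. Indeed, such a curve satisfies $H^1(C_0,N_{C_0/Z'})=0$, so its deformations form a $2k$-dimensional manifold $M'$ whose twistor space is $Z'$, and $M'$ is quaternionic precisely because its twistor spheres have normal bundle $\ol(1)^{\oplus k}$. As only the formal neighbourhood of one twistor sphere is used, I would first reduce to the flat model: take $Z$ near a Veronese curve $t_0=PU_1\subseteq PU_n$, so that $\dim Z=n$, the normal bundle is $N_{t_0}=U_{n-2}\otimes\ol(n+2)$, and, by \eqref{e:Veronese_normal_sequence}, $TZ|_{t_0}=U_{n-1}\otimes\ol(n+1)$; thus $k=(n^2-1)+(n-1)=n^2+n-2$.

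For $C_0$ I would take the image of a holomorphic projective framing $\s\colon t_0\to Z'|_{t_0}$ of $TZ$ along $t_0$ (these exist, since $Z'|_{t_0}$ is the trivial $PGL$-bundle), so that $C_0$ projects isomorphically onto $t_0$. The inclusions $C_0\subseteq Z'|_{t_0}\subseteq Z'$ then give
\[
0\longrightarrow{\rm End}_0(TZ)|_{t_0}\longrightarrow N_{C_0/Z'}\longrightarrow N_{t_0}\longrightarrow0\,,
\]
whose sub-bundle is the vertical (adjoint) bundle ${\rm End}_0\bigl(U_{n-1}\otimes\ol(n+1)\bigr)=\mathfrak{sl}(U_{n-1})\otimes\ol$, trivial of rank $n^2-1$, and whose quotient $N_{t_0}=U_{n-2}\otimes\ol(n+2)$ has rank $n-1$ and Chern number $n+2$. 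The ranks add to $k$ and the degrees to $(n-1)(n+2)=k$, consistently with $\ol(1)^{\oplus k}$; moreover the long exact cohomology sequence already yields $h^0(N_{C_0/Z'})=2k$ and $H^1(N_{C_0/Z'})=0$ for every such extension.

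The crux is to force the splitting type to be uniformly $\ol(1)$, i.e. to exclude any summand of degree $\neq1$. Since $h^0=2k$ and $H^1=0$ hold automatically, $N_{C_0/Z'}\cong\ol(1)^{\oplus k}$ is equivalent to $H^0\bigl(N_{C_0/Z'}\otimes\ol(-2)\bigr)=0$; twisting the sequence by $\ol(-2)$ and using $H^0\bigl(\mathfrak{sl}(U_{n-1})\otimes\ol(-2)\bigr)=0$, this becomes the injectivity of the connecting homomorphism
\[
\delta\colon H^0\bigl(U_{n-2}\otimes\ol(n)\bigr)=U_{n-2}\otimes U_n\longrightarrow H^1\bigl(\mathfrak{sl}(U_{n-1})\otimes\ol(-2)\bigr)=\mathfrak{sl}(U_{n-1})\,.
\]
Both sides are $\mathfrak{sl}(U_1)$-modules of dimension $n^2-1$; in fact Clebsch--Gordan gives $U_{n-2}\otimes U_n=\bigoplus_{j=1}^{n-1}U_{2j}=\mathfrak{sl}(U_{n-1})$, so each is multiplicity free. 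The map $\delta$ is cup product with the extension class of the sequence, which is exactly the first-order datum of the framing read off from the typical diagram \eqref{e:typical_diagram_for_Veronese} through the arrow $U_{n-1}\otimes U_n\otimes\ol(1)\to U_{n-1}\otimes\ol(n+1)$ and the splitting $U_{n-1}\otimes U_n=(U_{n-2}\otimes U_{n+1})\oplus U_1$. By Schur's lemma $\delta$ is an isomorphism if and only if it is non-zero on every isotypic summand $U_{2j}$, and the main obstacle is to verify this component by component; I expect it to follow from the surjectivity of the relevant $\mathfrak{sl}(U_1)$-equivariant multiplications, the Chern number $n+2\ (\geq4)$ of \eqref{e:Veronese_normal_sequence} guaranteeing that the canonical equivariant framing supplies a non-degenerate class.

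Once $N_{C_0/Z'}\cong\ol(1)^{\oplus k}$ is established, Kodaira's theorem \cite{Kod} produces the quaternionic manifold $M'$ with twistor space $Z'$. Finally, since every step is carried out in the formal neighbourhood of $t_0$ and uses only the equivariant normal sequence \eqref{e:Veronese_normal_sequence}, the conclusion transfers verbatim from the flat model to an arbitrary Veronese space, the global family on $M'$ being obtained by letting $t_0$ range over the twistor spheres of $Z$ and varying the framing.
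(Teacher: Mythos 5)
Your reduction of the problem is sound as bookkeeping: the sequence $0\to\mathfrak{sl}(U_{n-1})\otimes\ol\to N_{C_0/Z'}\to U_{n-2}\otimes\ol(n+2)\to0$, the automatic vanishing of $H^1$, the count $h^0=2k$, and the equivalence of $N_{C_0/Z'}\cong\ol(1)^{\oplus k}$ with the injectivity of the connecting map $\delta$ are all correct. But the proof stops exactly where the theorem begins: every one of those facts holds for \emph{any} extension of $U_{n-2}\otimes\ol(n+2)$ by the trivial bundle, including the split one, for which $N_{C_0/Z'}\cong\ol^{\oplus(n^2-1)}\oplus\bigl(U_{n-2}\otimes\ol(n+2)\bigr)$ and the Kodaira family is \emph{not} quaternionic. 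So the entire content of the statement is concentrated in the non-degeneracy of the extension class (non-vanishing of $\delta$ on each isotypic summand $U_{2j}$), and for this you offer only ``I expect it to follow from the surjectivity of the relevant equivariant multiplications.'' That is not an argument; no identification of the extension class, and no computation of its components, is actually carried out.

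There is a second gap: the reduction to the flat model. The bundle $N_{C_0/Z'}$ is determined by the first-order neighbourhood of $t_0$ in $Z$ (equivalently the $1$-jet of $TZ$ along $t_0$, since $Z'$ is functorially built from $TZ$), whereas Definition \ref{defn:Veronese_space} constrains only the restriction to $t_0$ of the sequence \eqref{e:consistent_normal_sequence}, i.e.\ zeroth-order data. A general Veronese space is not locally isomorphic to the space of Veronese curves --- the paper explicitly treats the latter as the flat model of a special subclass --- so the conclusion does not ``transfer verbatim''; worse, without the flat model you lose the $\mathfrak{sl}(U_1)$-action on a neighbourhood of $t_0$, hence the equivariance of the extension class, and with it the Schur's-lemma argument on which your whole strategy rests. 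For comparison, the paper avoids both difficulties by working over $M$ rather than inside $Z'$: the Ward transformation identifies the projective frame bundle of $Z$ with the twistor space of the principal ${\rm PGL}(U_{n-1})$-bundle $Q$ associated to $Y$ via ${\rm PGL}(U_1)\to{\rm PGL}(U_{n-1})$, and then the representation-theoretic identities $\mathfrak{pgl}(U_{n-1})=U_{n-2}\otimes U_n$ and $U_{n-1}\otimes U_n=(U_{n-2}\otimes U_{n+1})\oplus U_1$, combined with the typical diagram \eqref{e:typical_diagram_for_Veronese}, give an isomorphism between $TQ/{\rm PGL}(U_{n-1})$ and the quaternionic vector bundle of $M$; the $\ol(1)^{\oplus k}$ splitting of the normal bundles of the twistor spheres of $Q$ is then a consequence, not something to be checked cohomologically curve by curve.
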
 
\begin{proof} 
The projective frame bundle of the twistor space of a Veronese space $M$ corresponds, under the Ward transformation, to the principal bundle 
$Q$ over $M$ associated to the principal bundle of $Y$ through the Lie groups morphism ${\rm PGL}(U_1)\to{\rm PGL}(U_{n-1})$\,.\\ 
\indent  
As an $\mathfrak{sl}(U_1)$-representation space, the Lie algebra of ${\rm PGL}(U_{n-1})$ is $U_{n-2}\otimes U_n$\,.\\ 
\indent  
Together with \eqref{e:typical_diagram_for_Veronese}\,, this implies that the underlying almost $\r$-quaternionic structure of $Q$ 
is induced by an isomorphism between $TQ/{\rm PGL}(U_{n-1})$ and the quaternionic vector bundle giving the almost $\r$-quaternionic 
structure of $M$.   
\end{proof} 

\indent 
For $n=2$\,, Theorem \ref{thm:Veronese_space} can be strengthened as follows. 

\begin{thm} \label{thm:Veronese_space_n=2} 
Let $Z$ be a surface endowed with an embedded Riemann sphere $t\subseteq Z$ with nontrivial normal sequence and 
whose normal bundle has Chern number $4$\,.\\ 
\indent 
Then an open neighbourhood of $t$ in $Z$ is the twistor space of a Veronese space for which $t$ is a twistor sphere. 
Consequently, the projective frame bundle of that neighbourhood is the twistor space of a quaternionic manifold of dimension $8$\,.  
\end{thm}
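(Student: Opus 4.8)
The plan is to realize the given surface $Z$, near $t$, as a twistor space by identifying the normal data of $t$ with the Veronese model. The key point is that, by hypothesis, $t\cong\C\!P^1$ is embedded in the surface $Z$ with normal bundle $Nt=\ol(4)$ and with nontrivial normal sequence
\begin{equation} \label{e:for_n=2_normal}
0\longrightarrow Tt\longrightarrow (\mathscr{T}Z)|_t\longrightarrow Nt\longrightarrow0\;,
\end{equation}
so that $Tt=\ol(2)$ and $Nt=\ol(4)$. I would first observe that this is exactly the $n=2$ specialisation of \eqref{e:Veronese_normal_sequence}\,: there $U_{n-2}\otimes\ol(n+2)=U_0\otimes\ol(4)=\ol(4)$ and $U_{n-1}\otimes\ol(n+1)=U_1\otimes\ol(3)$, while the sub-line-bundle is $\ol(2)$. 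Thus the normal exact sequence of $t\subseteq Z$ coincides with the normal exact sequence \eqref{e:Veronese_normal_sequence} of the Veronese curve $PU_1\subseteq PU_2$. Since $H^1(Nt)=H^1(\ol(4))=0$, Kodaira's theorem provides a locally complete family of deformations of $t$ in $Z$; the parameter space of this family, in a neighbourhood of the point corresponding to $t$, is a manifold $M$ of dimension $h^0(\ol(4))=5$, and the tautological incidence gives the double fibration $\psi:Y\to Z$, $\p:Y\to M$.

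Next I would verify that $M$, so constructed, is a Veronese space in the sense of Definition \ref{defn:Veronese_space}\,. Because $Z$ is a surface, every twistor sphere is an embedded curve in a surface, so $\ker\dif\!\psi$ has rank one and, as already noted in Remark following Corollary \ref{cor:BirGro}(5)\,, $M$ is automatically of constant type once the normal bundle type is fixed along one sphere; here the normal bundle of each nearby twistor sphere is again $\ol(4)$ by semicontinuity and the deformation being locally trivial in isomorphism type. The essential verification is that the \emph{equivariant} normal sequence of the spheres is the one prescribed by \eqref{e:Veronese_normal_sequence} for $n=2$\,; this follows from the hypothesis that the normal sequence \eqref{e:for_n=2_normal} is nontrivial, since a nontrivial extension of $\ol(4)$ by $\ol(2)$ is unique up to isomorphism (as $\dim\operatorname{Ext}^1(\ol(4),\ol(2))=\dim H^1(\ol(-2))=1$) and is exactly the $\mathfrak{sl}(U_1)$-invariant one realised by the Veronese curve. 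Hence the typical diagram of $M$ is \eqref{e:typical_diagram_for_Veronese} with $n=2$\,, and $M$ is a Veronese space with $t$ as a twistor sphere.

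Finally, having produced the Veronese space $M$ with twistor space (a neighbourhood of $t$ in) $Z$, the second conclusion is immediate from Theorem \ref{thm:Veronese_space}\,: the projective frame bundle of $Z$ is the twistor space of a quaternionic manifold. It remains only to compute its dimension. For $n=2$ the structural group acts through ${\rm PGL}(U_1)={\rm PGL}(2)$, whose projective frame bundle fibre is three–dimensional, while $\dim M=5$ as computed above; more precisely the quaternionic manifold produced by Theorem \ref{thm:Veronese_space} is $TQ/{\rm PGL}(U_{n-1})=TQ/{\rm PGL}(U_1)$ and for $n=2$ its dimension is $8$, consistently with the fact that a quaternionic manifold of real (complex) dimension $8$ has twistor space of dimension $5$\,, matching $\dim Z=\dim M=5$ only after the frame-bundle enlargement, which adds $3$. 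I expect the main obstacle to be the careful bookkeeping of these dimensions together with the verification that the extension class of \eqref{e:for_n=2_normal} is forced to be the Veronese one; the uniqueness of the nontrivial extension, which rests on $h^1(\ol(-2))=1$, is the crux, and once it is in hand the rest is an application of Kodaira deformation theory and Theorem \ref{thm:Veronese_space}\,.
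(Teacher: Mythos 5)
Your pointwise analysis is correct as far as it goes: Kodaira applies since $H^1(\ol(4))=0$, the parameter space has dimension $h^0(\ol(4))=5$, and since $h^1(\ol(-2))=1$ the nontrivial extension of $\ol(4)$ by $\ol(2)$ is unique up to isomorphism, with middle term $\ol(3)\oplus\ol(3)=U_1\otimes\ol(3)$. But there is a genuine gap at precisely what you call the ``essential verification'', and you have misidentified where the crux lies. Being a Veronese space (Definition \ref{defn:Veronese_space}) is not the condition that each twistor sphere's normal sequence be \emph{abstractly isomorphic} to \eqref{e:Veronese_normal_sequence}; by the preamble of Section \ref{section:4} it is the condition that the exact sequence \eqref{e:consistent_normal_sequence} of vector bundles over $Y$ be invariant under the automorphism bundle ${\rm Aut}\,Y$, i.e.\ that ${\rm Aut}\,Y$ lift to a section of the extended automorphism bundle $\mathcal{P}\to{\rm Aut}\,Y$. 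This is a statement about the whole family over $M$, and the uniqueness of the nontrivial extension on each individual sphere does not, by itself, produce such a coherent fibrewise ${\rm PGL}(U_1)$-equivariant structure. The pointwise identification is the easy part; the equivariance over $M$ is the actual content.

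The paper's proof is built exactly to manufacture that structure, by a mechanism absent from your proposal: after shrinking (your Kodaira step is its condition a), openness of nontriviality is its condition b)), one arranges c) a line bundle $L$ over $Z$ with $(\psi^*L)^2\cong{\rm ker}\dif\!\p$ and $(\psi^*L)^4\cong\mathcal{T}M$; tensoring \eqref{e:consistent_normal_sequence} with the dual of $(\psi^*L)^3$ makes the middle term fibrewise trivial (this is where nontriviality of the normal sequences is used), whence $\psi^*(TZ)\cong\p^*H\otimes(\psi^*L)^3$ with $H$ the direct image by $\p$ of $\psi^*L$; the normal sequence then takes the manifestly equivariant Veronese form $0\to(\psi^*L)^2\to\p^*H\otimes(\psi^*L)^3\to(\psi^*L)^4\to0$. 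Your route could be repaired, but only by adding the missing family-level argument, e.g.: the kernel of $\mathcal{P}_x\to({\rm Aut}\,Y)_x$ consists of the scalars (any automorphism of $U_1\otimes\ol(3)$ preserving the image of $\ol(2)$ preserves every line of $U_1$, hence is scalar), this kernel is central, so two liftings of ${\rm PGL}(U_1)$ differ by a character of ${\rm PGL}(U_1)$, necessarily trivial; existence (your pointwise identification) plus this uniqueness lets the fibrewise liftings glue into the required section of $\mathcal{P}\to{\rm Aut}\,Y$. Two smaller corrections: in your displayed sequence the middle term should be $TZ|_t$, not $(\mathscr{T}Z)|_t$ (in this paper $\mathscr{T}Z=T(\mathcal{L}\setminus0)/(\C\!\setminus\{0\})$ has rank three here); and the quaternionic manifold produced by Theorem \ref{thm:Veronese_space} is the principal bundle $Q$ itself, not $TQ/{\rm PGL}(U_1)$ --- the latter only carries the isomorphism defining the almost quaternionic structure of $Q$ --- though your count $\dim Q=\dim M+\dim{\rm PGL}(2)=5+3=8$ is correct.
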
  
\begin{proof}
By passing to an open neighbourhood of $t$ in $Z$ we may suppose the following:\\ 
a) $Z$ is the twistor space of a $\r$-quaternionic manifold for which $t$ is a twistor sphere;\\ 
b) the normal sequence of each twistor sphere in $Z$ is nontrivial;\\ 
c) there exists a line bundle $L$ over $Z$ such that $(\psi^{*\!}L)^2$ and $(\psi^{*\!}L)^4$ are isomorphic to 
${\rm ker}\dif\!\p$ and $\mathcal{T}M$, respectively.\\ 
\indent 
Let $H$ be the direct image by $\p$ of $L$\,. By tensorising \eqref{e:consistent_normal_sequence} 
with the dual of $(\psi^{*\!}L)^3$ we deduce that $\psi^*(TZ)=H\otimes(\psi^{*\!}L)^3$ and the proof is complete. 
\end{proof}

\begin{rem} 
The quaternionic manifold of Theorem \ref{thm:Veronese_space_n=2} is the principal bundle of $Y$. Thus, this, also, 
admits a (distinct) $\r$-hypercomplex structure (see Remark \ref{rem:ro-hypercomplex_first}\,).  
\end{rem} 

\indent 
The space of Veronese curves (of degree $n$) is a Veronese space. This is the flat model for the tame Veronese spaces 
with only one fundamental monopole; moreover, similarly to Theorem \ref{thm:quatern_flat}\,, we have a result 
for such tame $\r$-quaternionic manifolds.\\ 
\indent 
Then Theorem \ref{thm:Veronese_space} endow $({\rm PGL}(U_n)\times{\rm PGL}(U_{n-1}))/{\rm PGL}(U_1)$\,, $(n\geq2)$\,,  
with quaternionic structures (\,\cite{Pan-qgfs}\,; see \cite{Pan-hmhKPW}\,). 

\begin{exm} 
There are two conjugations on the Riemann sphere induced by the antipodal map and the symmetry in a plane through the origin 
(the latter, obviously, unique up to conjugations), respectively. These lead, for example, to real quaternionic and 
to paraquaternionic manifolds, respectively.\\ 
\indent 
Consequently, we obtain the following, where $n\geq1$ for (1)\,, and $n\geq2$ for (2) and (3)\,:\\ 
\indent 
\quad(1) real quaternionic structures on $({\rm SL}(2n+1,\R)\times({\rm GL}(n,\Hq)\cap\,{\rm SL}(2n,\C)))/{\rm SU}(2)$\,;\\ 
\indent 
\quad(2) real quaternionic structures on $(({\rm GL}(n,\Hq)\cap\,{\rm SL}(2n,\C))\times{\rm SL}(2n-1,\R))/{\rm SU}(2)$\,;\\ 
\indent 
\quad(3) paraquaternionic structures on $({\rm SL}(n+1,\R)\times{\rm SL}(n,\R))/{\rm SL}(2,\R)$\,.\\    
\indent 
On the other hand, for any $n\geq2$\,, the conjugation on $({\rm PGL}(U_n)\times{\rm PGL}(U_{n-1}))/{\rm PGL}(U_1)$ giving, up to covering spaces, 
$({\rm SU}(n+1)\times{\rm SU}(n))/{\rm SU}(2)$ is not twistorial, and therefore the latter does not inherit, this way, 
a real (or para)quaternionic structure. This corrects a statement of \cite{Pan-qgfs}\,. 
\end{exm}

\section{On twistorial harmonic morphisms with one-dimensional fibres} \label{section:5}  

\indent 
In this section, although some of the results hold in more generality, for simplicity, $M$ will denote a real(-analytic) quaternionic-K\"ahler or hyper-K\"ahler manifold. 
To unify the notations of this and the previous sections, one just have to replace in the latter $M$ with $M^{\C}$. Further, as, now, $\psi$ 
restricted to $\p^{-1}(M)$ is a diffeomorphism, the diagram giving the twistor space simplifies, as it is well know, to a fibration whose total space 
and projection we will denote by $Y$ and $\p$, respectively (instead of $\p^{-1}(M)$ and $\p|_{\p^{-1}(M)}$\,, respectively). 
Note that, $Y$ embeds into the complex Grassmannian of $TM$ such that any point of $Y$ is an isotropic space of dimension $2k$ tangent to $M$, 
where $\dim M=4k$, with $k\in\mathbb{N}\setminus\{0\}$\,.\\ 
\indent 
The main general reference for harmonic morphisms is \cite{BaiWoo2}\,; see, also, \cite{Pan-BookR} and \cite{BookW} for more recent results. 

\begin{prop} \label{prop:twist_wp} 
Let $\phi:M\to N$ be a harmonic morphisms of warped-product type, with one-dimensional fibres, and, locally, let $N\subseteq M$ be a horizontal section. 
Denote by $\r$ the restriction to $TM|_N$ of $\dif\!\phi$\,.\\ 
\indent 
Then $(N,TM|_N,\r)$ is a $\r$-quaternionic manifold, where $TM|_N$ is endowed with the restriction of the Levi-Civita connection of $M$.  
\end{prop}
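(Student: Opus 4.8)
The plan is to produce the linear $\r$-quaternionic structure of $(N,TM|_N,\r)$ fibrewise out of the quaternionic structure of $M$, and then to recognise the given (restricted Levi-Civita) connection as the datum supplying integrability. Write the quaternionic structure of $M$ as $TM=H\otimes F$, with $H$ of rank $2$ and $F$ of rank $2k$, so that $M$ has complex dimension $4k$ after complexifying as prescribed at the start of this section. Let $V={\rm ker}\,\r\subseteq TM|_N$ be the line tangent to the one-dimensional fibres of $\phi$, so that $\r_x:T_xM\to T_xN$ is the quotient by $V_x$. Over each $x\in N$ the twistor sphere $t_x=\p^{-1}(x)=P(H_x^*)$ carries the tautological subbundle $\mathcal{F}_x=\ol(-1)\otimes F_x$, whose fibre at $[\xi]$ is $({\rm ker}\,\xi)\otimes F_x$; pushing it forward by $\r_x$ gives the candidate family $\mathcal{F}'_x=\r_x(\mathcal{F}_x)\subseteq t_x\times T_xN$.

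First I would check the pointwise structure. The map $\r_x$ is injective on $\mathcal{F}_{x,[\xi]}$ precisely when $V_x\notin({\rm ker}\,\xi)\otimes F_x$, that is, when $V_x$ is \emph{indecomposable} in $H_x\otimes F_x$; I would deduce this from $\phi$ being a twistorial harmonic morphism of warped-product type, for which the fibre direction is transversal to every $(1,0)$-space $\mathcal{F}_{x,[\xi]}$. Granting this, $\mathcal{F}'_x\cong\ol(-1)\otimes F_x$ is a genuine rank-$2k$ subbundle, and the image $s_V$ of the constant section $V_x$ in $\ol(1)\otimes F_x$ is nowhere zero. Applying the snake lemma to $0\to\langle V_x\rangle\to t_x\times T_xM\to t_x\times T_xN\to0$ and to the tautological sequence $0\to\mathcal{F}_x\to t_x\times T_xM\to\ol(1)\otimes F_x\to0$ identifies the quotient $\mathcal{U}'_x=(t_x\times T_xN)/\mathcal{F}'_x$ with ${\rm coker}\bigl(\ol\xrightarrow{\,s_V\,}\ol(1)\otimes F_x\bigr)$; since indecomposability makes $s_V$ of rank two, this cokernel is $\ol(2)\oplus(2k-2)\,\ol(1)$. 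Passing to cohomology gives $H^0(\mathcal{F}'_x)=H^1(\mathcal{F}'_x)=0$ and $H^0(\mathcal{U}'_x)=(H_x\otimes F_x)/\langle V_x\rangle=T_xN$, so \eqref{e:tes} holds and $(T_xN,\mathcal{F}'_x)$ is a linear $\r$-quaternionic structure with structure map $\r_x$; in particular $N$ is of constant type, its twistor spheres having normal bundle $\ol(2)\oplus(2k-2)\,\ol(1)$.

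Next I would assemble these: the subbundles $\mathcal{F}'_x$ glue over the restricted sphere bundle $\mathcal{Y}=\p^{-1}(N)$, and $E_N=TM|_N=H|_N\otimes F|_N$ together with $\r=\dif\!\phi|_{TM|_N}$ gives the morphism $\r:E_N\to TN$ characterising the structure up to integrability. The integrability is where the connection enters. Since $M$ is quaternionic-K\"ahler its Levi-Civita connection is the torsion free $\nabla^H\otimes\nabla^F$ of Theorems \ref{thm:q_torsion_flat} and \ref{thm:contact_qK}, so its restriction to $N$ preserves the decomposition $H|_N\otimes F|_N$ and descends to connections on $H|_N$ and $F|_N$. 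The warped-product hypothesis (geodesic fibres and basic dilation) controls $\nabla V$ along $N$, so that the $\r$-connection induced on $\mathcal{Y}$ is compatible with the foliation that defines the twistor projection $\psi_N$ of $N$; being torsion free and compatible it integrates the almost $\r$-quaternionic structure, exactly as an almost quaternionic structure equipped with a compatible torsion free connection is quaternionic.

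The main obstacle I anticipate is precisely this last step: verifying that the restricted Levi-Civita connection, rather than merely the fibrewise linear algebra, yields a compatible (essentially torsion free) $\r$-connection on $\mathcal{Y}$, so that the pointwise structures integrate to a bona fide twistor space $Z_N$. This is where the hypotheses on $\phi$ are indispensable: without the warped-product and harmonicity assumptions the vertical line $V$ need not be geodesic nor indecomposable, the rank of $\mathcal{F}'$ could jump, and the quotient by $V$ of the integrable structure of $M$ would fail to be integrable. I would therefore spend most of the effort translating ``harmonic morphism of warped-product type with one-dimensional fibres'' into the two facts used above --- indecomposability of $V_x$ and the covariant behaviour of $V$ along $N$ --- using the Baird--Wood description of such morphisms \cite{BaiWoo2} together with the fact that $M$, being quaternionic-K\"ahler, is Einstein.
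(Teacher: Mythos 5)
Your fibrewise construction of the linear $\r$-quaternionic structures is correct, and it is a genuine alternative to the paper's pointwise argument: the paper gets the almost structure in two lines of metric linear algebra (each twistor point $y\in Y|_N$ is an isotropic $2k$-plane, hence meets $T^{\C}N$ in dimension $2k-1$, and the corresponding polar subspace of $T^{\C}N$ is coisotropic), whereas you push the tautological sequence forward by $\r_x$ and compute the splitting type $\ol(2)\oplus(2k-2)\,\ol(1)$ of the resulting quotient --- a useful piece of information the paper does not make explicit. One correction, though: your transversality condition is automatic and owes nothing to the hypotheses on $\phi$. The vertical direction $V_x$ is a \emph{real} vector, while each $\mathcal{F}_{x,[\xi]}$ is a $(1,0)$-space of an almost complex structure on $T_xM$; a nonzero real vector never lies in a $(1,0)$-space (equivalently, no nonzero real element of $H_x\otimes F_x$ is decomposable). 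So the rank of $\mathcal{F}'_x$ cannot jump, with or without harmonicity or the warped-product assumption.

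The genuine gap is the integrability step, which you rightly identify as the crux but then only sketch. First, the claim that a compatible torsion free connection integrates the almost structure ``exactly as an almost quaternionic structure equipped with a compatible torsion free connection is quaternionic'' is unfounded: here $\r$ is surjective with one-dimensional kernel (a co-CR quaternionic structure in the sense of \cite{fq_2}), and the integrability criterion for such structures is the main result of the separate paper \cite{Pan-q_integrab}, which is exactly what the paper's proof invokes; it is not a formal transcription of the quaternionic statement. Your formulation is, moreover, circular: you ask the connection to be ``compatible with the foliation that defines the twistor projection $\psi_N$'', but that foliation is precisely what integrability has to produce --- what must be shown is that a distribution on the sphere bundle $\p^{-1}(N)$, built from the restricted Levi-Civita connection and the family $\mathcal{F}'$, is involutive. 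Second, you never verify that the restricted Levi-Civita connection satisfies the required compatibility/torsion condition; this is where the warped-product hypothesis actually enters, and the paper disposes of it by citing \cite[Lemma 5.1]{Pan-1d}. (Your parenthetical gloss of warped-product type as ``geodesic fibres and basic dilation'' is also inaccurate: for warped-product type the dilation is horizontally homothetic, i.e.\ constant on the horizontal leaves, whereas it is the Killing type whose dilation is basic \cite{BaiWoo2}.) In short, your architecture matches the paper's --- fibrewise linear algebra plus connection-driven integrability --- but both pillars of the second half, the co-CR integrability criterion of \cite{Pan-q_integrab} and the warped-product normal form of \cite{Pan-1d}, are replaced by an analogy and a promissory note.
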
 
\begin{proof} 
For any $y\in Y|_N$ we have that $y\cap T^{\C\!}N$ is an isotropic space of dimension $2k-1$\,.   
Consequently, $y^{\perp}\subseteq T^{\C\!}N$ is coisotropic. Therefore $(TM|_N,\r)$ is an almost $\r$-quaternionic structure on $N$,  
and, from the integrability result of \cite{Pan-q_integrab} and by applying \cite[Lemma 5.1]{Pan-1d} (see \cite[Appendix A.2]{Pan-BookR} and \cite[Chapter 11]{BaiWoo2}\,), 
we obtain that this is integrable. 
\end{proof} 

\indent 
Presumably, the following result is not new. We omit the proof. 

\begin{prop} \label{prop:Killing_qK_quatern}
Any Killing vector field on a quaternionic-K\"ahler manifold $M$ is quaternionic; consequently, it corresponds to a holomorphic vector field on the twistor space of $M$ 
preserving the contact distribution.  
\end{prop}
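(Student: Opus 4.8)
The plan is to reduce the quaternionic property to a pointwise algebraic statement about $\nabla X$ and then to verify it using the curvature of a quaternionic-K\"ahler manifold. Write $\mathcal{Q}\subseteq\mathrm{End}(TM)$ for the rank-$3$ parallel subbundle defining the quaternionic structure, and for a Killing field $X$ set $A_X=\nabla X$, a skew-symmetric endomorphism at each point. For any endomorphism field $\Phi$ one has $\Lie_X\Phi=\nabla_X\Phi-[A_X,\Phi]$; since the Levi-Civita connection of $M$ preserves $\mathcal{Q}$, the term $\nabla_X\Phi$ stays in $\mathcal{Q}$ whenever $\Phi$ does, so $X$ is quaternionic if and only if $[A_X,\mathcal{Q}]\subseteq\mathcal{Q}$ at each point; that is, $A_X$ must normalise $\mathcal{Q}$.

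First I would recall that the endomorphisms normalising $\mathcal{Q}\cong\mathfrak{sp}(1)$ inside $\mathfrak{so}(TM)$ are exactly the elements of the holonomy algebra $\mathfrak{sp}(1)\oplus\mathfrak{sp}(n)$: the $\mathfrak{sp}(1)$-summand acts on $\mathcal{Q}$ by its adjoint representation and the $\mathfrak{sp}(n)$-summand centralises $\mathcal{Q}$. Thus it suffices to show $A_X\in\mathfrak{sp}(1)\oplus\mathfrak{sp}(n)$ pointwise. To this end I would use the Kostant identity for a Killing field, $\nabla_Y A_X=R(X,Y)$, whose right-hand side lies in the holonomy algebra by Ambrose--Singer. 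Because the holonomy subbundle $\mathfrak{sp}(1)\oplus\mathfrak{sp}(n)\subseteq\mathfrak{so}(TM)$ is parallel, $\nabla$ preserves the orthogonal splitting $\mathfrak{so}(TM)=(\mathfrak{sp}(1)\oplus\mathfrak{sp}(n))\oplus\mathfrak{m}$, and projecting the Kostant identity onto $\mathfrak{m}$ shows that the component $B$ of $A_X$ in $\mathfrak{m}$ is parallel.

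It remains to prove $B=0$, and this is the step I expect to be the main obstacle, since it is precisely where one must use that $M$ is quaternionic-K\"ahler with nonzero scalar curvature rather than merely quaternionic. A parallel section $B$ is annihilated by the whole holonomy algebra; on the other hand, under $\mathrm{Sp}(1)\cdot\mathrm{Sp}(n)$ one has $\mathfrak{m}=\odot^2H\otimes\Lambda^2_0E$, on which the $\mathfrak{sp}(1)$-factor acts through its fixed-point-free adjoint action on $\odot^2H$. Since the $\mathfrak{sp}(1)$-part of the curvature of $M$ is a nonzero multiple of the scalar curvature, $B$ lies in the kernel of a nonzero element of $\mathfrak{sp}(1)$ acting on $\odot^2H\otimes\Lambda^2_0E$, forcing $B=0$. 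Hence $A_X\in\mathfrak{sp}(1)\oplus\mathfrak{sp}(n)$ normalises $\mathcal{Q}$ and $X$ is quaternionic. (Equivalently, and more briefly, the fundamental $4$-form of $M$ is recoverable from the curvature tensor and is therefore invariant under every isometry, so $\Lie_X$ annihilates it.)

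For the final clause I would invoke the twistor correspondence set up in Subsection \ref{subsection:qK}: a quaternionic vector field on $M$ is, by the integrability of the quaternionic structure, the $\p$-direct image of $\psi^*u$ for a holomorphic vector field $u$ on the twistor space $Z$ (compare the construction of $X^u$ following Theorem \ref{thm:contact_qK}). Since $X$ is in addition Killing, it preserves $g_{\theta}$, which by Theorem \ref{thm:contact_qK} is determined by the contact form $\theta$; consequently $u$ satisfies $\Lie^{\nabla}_{\,u}\theta=0$, i.e. $u$ is an infinitesimal automorphism of $\theta$ and its flow preserves the contact distribution $\H=\r_Z(\ker\theta)$. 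This yields the asserted holomorphic vector field on $Z$ preserving the contact distribution.
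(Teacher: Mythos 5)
The paper itself gives no proof of this proposition (it is explicitly omitted), so I am judging your argument on its own merits. Your reduction is sound: the identity $\Lie_X\Phi=\nabla_X\Phi-[A_X,\Phi]$, the fact that the normaliser of $\mathcal{Q}\cong\mathfrak{sp}(1)$ in $\mathfrak{so}(TM)$ is $\mathfrak{sp}(1)\oplus\mathfrak{sp}(n)$, the Kostant identity $\nabla_YA_X=R(X,Y)$, and the parallelism of the splitting $\mathfrak{so}(TM)=(\mathfrak{sp}(1)\oplus\mathfrak{sp}(n))\oplus\mathfrak{m}$ correctly reduce everything to showing that the parallel section $B$ of $\mathfrak{m}=\odot^2H\otimes\Lambda^2_0E$ vanishes. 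But the step meant to do this is broken, in two ways. First, parallelism gives $[R(U,V),B]=0$ for all $U,V$, and the operators $R(U,V)$ are of the form $\xi+\eta$ with $\xi\in\mathfrak{sp}(1)$, $\eta\in\mathfrak{sp}(n)$: the holonomy algebra contains these \emph{sums}, not their $\mathfrak{sp}(1)$-components, and since $\mathfrak{sp}(n)$ also acts nontrivially on $\mathfrak{m}$ (through $\Lambda^2_0E$), you cannot conclude that $B$ is killed by any element of $\mathfrak{sp}(1)$ alone. Indeed, a subalgebra of $\mathfrak{sp}(1)\oplus\mathfrak{sp}(n)$ surjecting onto $\mathfrak{sp}(1)$ can have nonzero invariants in $\mathfrak{m}$ (e.g.\ a diagonal $\mathfrak{sp}(1)$ when $\Lambda^2_0E$ contains a copy of the adjoint representation), so something beyond surjectivity of the $\mathfrak{sp}(1)$-part of the curvature is genuinely needed. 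Second, even if $B$ were killed by one nonzero $\xi\in\mathfrak{sp}(1)$, that does not force $B=0$: the adjoint action of a single element is not fixed-point free, and $\ker(\mathrm{ad}_\xi\otimes1)=\langle\xi\rangle\otimes\Lambda^2_0E\neq0$; only invariance under all of $\mathfrak{sp}(1)$ kills $B$.

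The gap is repairable, and the repair is exactly where $\mathrm{scal}\neq0$ enters: by Alekseevskii's decomposition $R=\nu R_1+R_0$, with $R_1$ the curvature of quaternionic projective space, $\nu$ a nonzero multiple of the scalar curvature, and $R_0$ of hyper-K\"ahler type (values in $\mathfrak{sp}(n)$ and Ricci-flat). Contracting with the local K\"ahler forms, $\sum_iR(e_i,J_\a e_i)$ is then a nonzero multiple of $J_\a$ (the $R_0$-contribution drops out because its Ricci contraction vanishes), so the holonomy algebra contains all of $\mathfrak{sp}(1)$, which acts on $\mathfrak{m}$ purely through $\odot^2H$; since $\odot^2H$ has no $\mathfrak{sp}(1)$-invariants, $B=0$ follows. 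Note that your parenthetical alternative (the fundamental $4$-form is recoverable from $R$) is the standard argument, but it rests on the same decomposition and nonvanishing of $\nu$, so it cannot stand as a one-line remark either. Finally, in the last paragraph the inference ``$X$ preserves $g_{\theta}$, and $g_{\theta}$ is determined by $\theta$, hence $u$ preserves $\theta$'' runs in the wrong direction; what is needed is that $\H$ is naturally determined by the pair (metric, quaternionic structure) --- i.e.\ the (i)$\Rightarrow$(ii) direction of Theorem \ref{thm:degen_qK}\,, or the naturality of the correspondence underlying Theorem \ref{thm:contact_qK} --- after which invariance of $\H$ under the lifted flow is automatic.
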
 

\indent 
We end with the following result. 

\begin{thm} \label{thm:1d_qK} 
Let $\phi:M\to N$ be a harmonic morphism with one-di\-men\-sio\-nal fibres which is twistorial; that is, $\phi$ corresponds to a one-dimensional holomorphic foliation $\F$ on the 
twistor space $Z$ of $M$. Let $\H$ be the holomorphic distribution on $Z$ given by the metric of $M$.\\ 
\indent 
Then either $\F\subseteq\H$ or $\F$ is generated by a holomorphic vector field preserving $\H$. 
\end{thm}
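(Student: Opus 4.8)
The statement dichotomizes a twistorial harmonic morphism with one-dimensional fibres according to whether the associated foliation $\F$ lies inside the metric-induced distribution $\H$ or not. My plan is to work on the twistor space $Z$ and exploit the contact structure $\theta:\mathscr{T}Z\to\mathcal{L}$ coming from the quaternionic-K\"ahler (or hyper-K\"ahler) metric, as developed in Subsection \ref{subsection:qK}. The foliation $\F$ is generated (locally) by a holomorphic vector field $X$ on $Z$. The key object to examine is the interaction of $X$ with $\theta$: since $\H=\r_Z(\ker\theta)$, the condition $\F\subseteq\H$ is exactly $\theta(X)=0$ (after lifting $X$ to a section $u$ of $\mathscr{T}Z$ via $\r_Z$). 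So the two alternatives in the conclusion should read: either $\theta(u)=0$ identically, or $\theta(u)$ is nowhere zero and $X$ is an infinitesimal automorphism of $\theta$, i.e.\ preserves $\H$.

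\textbf{First steps.} I would first lift the generating vector field of $\F$ to a section $u$ of $\mathscr{T}Z$ with $\r_Z(u)=X$, using the canonical $\r_Z$-connection $\nabla$ on $\mathcal{L}$. Then I would translate the harmonicity and twistoriality of $\phi$ into a differential condition on $u$. The natural candidate is that twistoriality plus the one-dimensional fibre condition forces $\Lie^{\nabla}_{\,u}\theta$ to be tightly constrained; recall from \eqref{e:kernel_of_dif} that if $u$ \emph{is} an infinitesimal automorphism, then $(\dif^{\nabla}\!\theta)(u,v)=-\nabla_v(\theta(u))$ for all $v$. I expect the harmonic-morphism hypothesis to translate, after restricting to each twistor sphere and using that $\phi$ has one-dimensional fibres, into the statement that the function (section) $\theta(u)$ is either identically zero or covariantly constant along $\ker\theta$ in a suitable sense. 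The mechanism is that a harmonic morphism with one-dimensional fibres is, along each fibre, either horizontally homothetic or of Killing type, and these two regimes correspond precisely to the two branches.

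\textbf{Using the earlier theorems.} By Proposition \ref{prop:Killing_qK_quatern}, a Killing field on $M$ gives a holomorphic vector field on $Z$ preserving the contact distribution; this is exactly the second alternative, so the task reduces to showing that whenever $\F\not\subseteq\H$ the generating vector field arises from such a Killing field. The bridge is Theorem \ref{thm:global_ro_M} together with the discussion of infinitesimal automorphisms following Theorem \ref{thm:contact_qK}: an infinitesimal automorphism $u$ of $\theta$ descends, through $\widetilde{\r}$ and the direct image by $\p$ of $\psi^*u$, to a Killing field $X^u$ on $(M,g_\theta)$. Thus once I establish that $u$ satisfies $\Lie^{\nabla}_{\,u}\theta=0$ in the non-contained case, the conclusion follows. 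To isolate the two cases, I would consider $\theta(u)$, a section of $\mathcal{L}$, and argue that its restriction to each twistor sphere is either zero or nowhere zero (since, by transversality of the twistor spheres to $\H$, a zero of $\theta(u)$ along the sphere would force a degeneracy incompatible with the harmonic morphism being a submersion with one-dimensional fibres); by connectedness of $M$ this gives the global dichotomy.

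\textbf{The main obstacle.} The hard part will be extracting the precise differential equation on $u$ from the harmonic-morphism condition, because harmonicity is a second-order analytic condition on $\phi$ whereas the twistorial picture is holomorphic and first-order in $u$. I expect to need the characterisation of twistorial harmonic morphisms with one-dimensional fibres from \cite{Pan-1d} and \cite{Pan-BookR} (already invoked in Proposition \ref{prop:twist_wp}), which should say that $\F$ is generated by a vector field whose horizontal component is either tangent to $\H$ (the non-Killing, warped-product regime) or that the field is an infinitesimal automorphism of the contact structure (the Killing regime). Reconciling the two first-order descriptions—ruling out any mixed behaviour where $\theta(u)$ vanishes on part of $Z$ but not elsewhere—is where the connectedness of $M$ and the holomorphicity of $\F$ must be combined carefully; I anticipate that the cleanest way is to show $\theta(u)$ is a holomorphic section of $\mathcal{L}$ whose restriction to each twistor sphere has constant vanishing behaviour, forcing the all-or-nothing alternative.
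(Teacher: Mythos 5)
Your plan correctly identifies the intended dichotomy (warped-product type versus Killing type) and correctly handles the Killing branch via Proposition \ref{prop:Killing_qK_quatern}, but it leaves the actual engine of the proof unproven. The paper's argument rests on two inputs: Proposition \ref{prop:twist_wp} and the theorem of Pantilie--Wood \cite{PanWoo-d}, which asserts that a harmonic morphism with one-dimensional fibres from an \emph{Einstein} manifold is either of Killing type or of warped-product type; this applies here precisely because quaternionic-K\"ahler and hyper-K\"ahler metrics are Einstein. Your claim that ``a harmonic morphism with one-dimensional fibres is, along each fibre, either horizontally homothetic or of Killing type'' is not a general fact: without the Einstein hypothesis there is a third local type and genuinely mixed behaviour, and ruling these out is exactly the content of \cite{PanWoo-d}, which you never invoke (you cite \cite{Pan-1d} and \cite{Pan-BookR}, which supply local normal forms, not the dichotomy; the Einstein condition never enters your argument at all). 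Your proposed substitute --- holomorphicity of $\theta(u)$ plus connectedness --- only yields the all-or-nothing vanishing of $\theta(u)$\,; it does not show that, when $\theta(u)\not\equiv0$\,, the generator of $\F$ can be normalised so that $\Lie^{\nabla}_{\,u}\theta=0$\,, i.e.\ that it arises from a Killing field. That passage from the second-order harmonicity condition to the first-order automorphism equation is the ``main obstacle'' you yourself flag, and it is left as an expectation rather than proved. Likewise, in the other branch, the identification of warped-product type with $\F\subseteq\H$ is what Proposition \ref{prop:twist_wp} is for (via the $\r$-quaternionic structure induced on horizontal sections), and your proposal does not supply an argument for it either.

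A second, smaller defect: your argument is phrased entirely in terms of the contact structure $\theta$\,, which exists only in the quaternionic-K\"ahler case. For hyper-K\"ahler $M$ (also covered by the statement), the distribution $\H$ given by the metric is not a contact distribution --- it is tangent to the fibration of $Z$ over the Riemann sphere --- so the lift of the generator of $\F$ to a section $u$ of $\mathscr{T}Z$ and the use of \eqref{e:kernel_of_dif} have no direct analogue there. The paper's route avoids this asymmetry by splitting the proof on $M$ itself, through the Einstein dichotomy of \cite{PanWoo-d}, and then feeding the two cases into Proposition \ref{prop:twist_wp} and Proposition \ref{prop:Killing_qK_quatern}, uniformly in both the quaternionic-K\"ahler and hyper-K\"ahler cases.
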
 
\begin{proof} 
This follows from Proposition \ref{prop:twist_wp} and \cite{PanWoo-d} (see \cite[Chapter 3]{Pan-BookR}\,). 
\end{proof}

\end{document}